\documentclass[review]{elsarticle}
\usepackage{amsmath, amssymb, amsthm, color}
\usepackage{lineno,hyperref, graphicx}
\usepackage[english]{babel}
\modulolinenumbers[5]
\bibliographystyle{elsarticle-num}
\pagestyle{plain}
\def\R{\mathbb R}  \def\N{\mathbb N} \def\C{\mathbb C}
\newtheorem{thm}{Theorem}[section]
\newtheorem{lem}[thm]{Lemma}
\newtheorem{cor}[thm]{Corollary}
\newtheorem{defn}[thm]{Definition}

\newtheorem{rem}[thm]{Remark}
\numberwithin{equation}{section}
\setlength{\topmargin}{0cm}
\setlength{\oddsidemargin}{0cm}
\setlength{\evensidemargin}{0cm}
\setlength{\textwidth}{16.5truecm}
\setlength{\textheight}{21truecm}
\begin{document}

\begin{frontmatter}
\title{The Cauchy problem for the critical inhomogeneous nonlinear Schr\"{o}dinger equation in $H^{s}(\R^{n})$}
\author{JinMyong An}
\author[]{JinMyong Kim\corref{mycorrespondingauthor}}
\ead{jm.kim0211@ryongnamsan.edu.kp}
\cortext[mycorrespondingauthor]{Corresponding author}
\address{Faculty of Mathematics, {\bf Kim Il Sung} University, Pyongyang, Democratic People's Republic of Korea}
\begin{abstract}

In this paper, we study the Cauchy problem for the critical inhomogeneous nonlinear Schr\"{o}dinger (INLS) equation
\[iu_{t} +\Delta u=|x|^{-b} f(u), ~u(0)=u_{0} \in H^{s} (\R^{n} ),\]
where $n\ge3$, $1\le s<\frac{n}{2} $, $0<b<2$ and $f(u)$ is a nonlinear function that behaves like $\lambda \left|u\right|^{\sigma } u$ with $\lambda \in \C$ and $\sigma =\frac{4-2b}{n-2s} $. We establish the local well-posedness as well as the small data global well-posedness and scattering in $H^{s} (\R^{n} )$ with $1\le s<\frac{n}{2}$ for the critical INLS equation under some assumption on $b$. To this end, we first establish various nonlinear estimates by using fractional Hardy inequality and then use the contraction mapping principle based on Strichartz estimates.
\end{abstract}
\begin{keyword}
Inhomogeneous nonlinear Schr\"{o}dinger equation\sep Critical\sep Well-posedness \sep Scattering\sep Strichartz estimates \sep Hardy inequality\\
2020 MSC: 35Q55\sep 46E35
\end{keyword}
\end{frontmatter}


\section{Introduction}

In the present paper, we consider the Cauchy problem for the inhomogeneous nonlinear Schr\"{o}dinger (INLS) equation
\begin{equation} \label{GrindEQ__1_1_}
\left\{\begin{array}{l} {iu_{t} +\Delta u=|x|^{-b} f(u),} \\ {u(0,\; x)=u_{0} (x),} \end{array}\right.
\end{equation}
where $u:\;\R\times\R^{n}\to \C$, $u_{0}:\;\R^{n}\to \C$, $b>0$ and $f$ is of class ${\rm X} \left(\sigma ,s,b\right)$.
The class ${\rm X} \left(\sigma ,s,b\right)$ is defined as follows.
\begin{defn}[\cite{AK21}]\label{defn_1.1.}
\textnormal{Let $f:\C\to \C$, $s\ge 0$, $0\le b<2$ and $\left\lceil s\right\rceil $ denote the minimal integer which is larger than or equals to $s$. For $k\in \N$, let $k$-th order complex partial derivative of $f$ be defined under the identification $\C=\R^{2} $ (see Section 2).
We say that $f$ is of class ${\rm X} \left(\sigma ,s,b\right)$ if it satisfies one of the following conditions:
\begin{itemize}
  \item $f\left(z\right)$ is a polynomial in $z$ and $\bar{z}$ satisfying $1<\deg \left(f\right)=1+\sigma \le 1+\frac{4-2b}{n-2s} $ $(1<\deg \left(f\right)=1+\sigma <\infty $, if $s\ge \frac{n}{2} )$
  \item $f\in C^{\max \left\{\left\lceil s\right\rceil ,1\; \right\}} \left(\C\to \C\right)$ and
    \begin{equation} \label{GrindEQ__1_2_}
      \left|f^{\left(k\right)} (z)\right|\lesssim\left|z\right|^{\sigma +1-k}
    \end{equation}
   for any $0\le k\le \max \left\{\left\lceil s\right\rceil ,1\; \right\}$ and $z\in \C$, where $\left\lceil s\right\rceil \le \sigma +1\le 1+\frac{4-2b}{n-2s} $ ($\left\lceil s\right\rceil \le \sigma +1<\infty $, if $s\ge \frac{n}{2}$).
\end{itemize}}
\end{defn}

\begin{rem}[\cite{AK21}]\label{rem 1.2.}
\textnormal{Let $s\ge 0$ and $0<b<2$. Assume that $0<\sigma \le \frac{4-2b}{n-2s} $, if $s<\frac{n}{2} $, and that $0<\sigma <\infty $, if $s\ge \frac{n}{2} $. If $\sigma $ is not an even integer, assume further $\left\lceil s\right\rceil \le \sigma +1$. Then we can easily see that $f(u)=\lambda \left|u\right|^{\sigma } u$ with $\lambda \in \C$ is a model case of class ${\rm X} \left(\sigma ,s,b\right)$.}
\end{rem}

The INLS equation \eqref{GrindEQ__1_1_} arises in nonlinear optics for the propagation of laser beam and it has been widely studied by many authors. For the physical background and applications of \eqref{GrindEQ__1_1_}, we refer the reader to \cite{BKMT11, BKVLMT12, G00, LT94}.

The INLS equation \eqref{GrindEQ__1_1_} has the following equivalent form:
\begin{equation} \label{GrindEQ__1_3_}
u(t)=S(t)u_{0} -i\int _{0}^{t}S(t-\tau )|x|^{-b} f\left(u(\tau )\right)d\tau  ,
\end{equation}
where $S(t)=e^{it\Delta } $ is the Schr\"{o}dinger semi-group. It is convenient to introduce the following notation which is used throughout the paper:
\begin{equation} \label{GrindEQ__1_4_}
\sigma _{s} =\left\{\begin{array}{l} {\frac{4-2b}{n-2s} ,\; 0\le s<\frac{n}{2}, } \\ {\infty ,\; s\ge \frac{n}{2} }. \end{array}\right.
\end{equation}

When $0\le s<\frac{n}{2} $, $\sigma _{s} $ is said to be a $H^{s}$-critical power. If $s\ge 0$, $\sigma <\sigma _{s} $ is said to be a $H^{s} $-subcritical power. See \cite{AK21} for example. We say that a pair $(\gamma(p),\;p)$ is admissible, if
\begin{equation} \label{GrindEQ__1_5_}
\left\{\begin{array}{l} {2\le p\le \frac{2n}{n-2} ,\;n\ge 3,} \\ {2\le p<\infty , \;n=2,} \\ {2\le p\le \infty , \;n=1,} \end{array}\right.
\end{equation}
and
\begin{equation} \label{GrindEQ__1_6_}
\frac{2}{\gamma (p)} =\frac{n}{2} -\frac{n}{p} .
\end{equation}

The local and global well-posedness as well as the scattering and blow-up in the energy space $H^{1} (\R^{n} )$ for \eqref{GrindEQ__1_1_} with $f(u)=\lambda \left|u\right|^{\sigma } u$ have been widely studied by many authors. See, for example, \cite{AC21, C21, C03, CG16, D18, D19, F16, FG17, FG19, GS08, G12, GM21} and the references therein.

Meanwhile, the local and global well-posedness in the Sobolev space $H^{s} (\R^{n} )$ for \eqref{GrindEQ__1_1_} have also been investigated.
Guzm\'{a}n \cite{G17} established the local and global well-posedness in $H^{s} (\R^{n} )$ with $0\le s\le \min \left\{1,\;\frac{n}{2} \right\}$ for \eqref{GrindEQ__1_1_} with $f(u)=\lambda \left|u\right|^{\sigma } u$. More precisely, he proved that:
\begin{itemize}
  \item if $0<\sigma <\sigma _{0} $, and $0<b<\min \{ 2,\;n\} $, then \eqref{GrindEQ__1_1_} is globally well-posed in $L^{2} (\R^{n} )$;
  \item  if $0<s\le \min \left\{1,\;\frac{n}{2} \right\}$, $0<b<\tilde{2}$ and $0<\sigma <\sigma _{s} $, then \eqref{GrindEQ__1_1_} is locally well-posed in $H^{s}(\R^{n})$;
  \item if $0<s\le \min \left\{1,\;\frac{n}{2} \right\}$, $0<b<\tilde{2}$ and $\sigma _{0} <\sigma <\sigma _{s} $, then \eqref{GrindEQ__1_1_} is globally well-posed in $H^{s}(\R^{n})$ for small initial data, where
\end{itemize}
\begin{equation} \label{GrindEQ__1_7_}
\tilde{2}=\left\{\begin{array}{l} {\frac{n}{3} ,\;n=1,\;2,\;3,} \\ {2,\;n\ge 4.} \end{array}\right.
\end{equation}
Later, An-Kim \cite{AK21} improved the local well-posedness result of \cite{G17} by proving that the INLS equation \eqref{GrindEQ__1_1_} is locally well-posed in $H^{s}(\R^{n})$ if $0\le s<\min \left\{\frac{n}{2} +1,\; n\right\}$, $0<b<\min \left\{2,\; n-s,\; 1+\frac{n-2s}{2} \right\}$, $0<\sigma <\sigma _{s} $ and $f$ is of class ${\rm X} \left(\sigma ,s,b\right)$.
But the authors in \cite{G17, AK21} only dealt with the $H^{s}$-subcritical case and the local well-posedness for the INLS equation \eqref{GrindEQ__1_1_} in the $H^{s}$-critical case, i.e. $\sigma =\frac{4-2b}{n-2s} $ with $0\le s<\frac{n}{2} $ was not known until very recently. See Remark 1.7 of \cite{G17} and Remark 1.5 of \cite{AK21} for example.

The purpose of this paper is to establish the local well-posedness as well as the small data global well-posedness and scattering in $H^{s}(\R^{n})$ with $1\le s<\frac{n}{2}$ for the INLS equation \eqref{GrindEQ__1_1_} in the $H^{s}$-critical case. To arrive at this goal, we first establish various nonlinear estimates by using fractional Hardy inequality and then use the contraction mapping principle based on Strichartz estimates.

The main results of this paper are the following two theorems.

\begin{thm}\label{thm 1.3.}
Let $n\ge 3$, $1\le s<\frac{n}{2} $, $0<b<{\rm 1}+\frac{n-2s}{2} $ and $\sigma =\frac{4-2b}{n-2s} $. Assume that $f$ is of class ${\rm X} \left(\sigma ,s,b\right)$. Assume further that one of the following conditions is satisfied:
\begin{itemize}
  \item $s\in \N$ and $b<\frac{4s}{n} $,
  \item $s\notin \N$, $n\ge 4$ and $b<\frac{6s}{n} -1$.
\end{itemize}
Then for any $u_{0} \in H^{s} (\R^{n})$, there exists $T=T\left(u_{0} \right)>0$ such that \eqref{GrindEQ__1_1_} has a unique solution
\begin{equation} \label{GrindEQ__1_8_}
u\in L^{\gamma \left(r\right)} \left(\left[-T,\;T\right],\;H_{r}^{s} (\R^{n})\right),
\end{equation}
where $\left(\gamma \left(r\right),\;r\right)$ is an admissible pair satisfying
\begin{equation} \label{GrindEQ__1_9_}
r=\frac{2n\sigma +2n}{n\sigma +n-2} .
\end{equation}
Moreover, for any admissible pair $\left(\gamma \left(p\right),\;p\right)$, we have
\begin{equation} \label{GrindEQ__1_10_}
u\in L^{\gamma \left(p\right)} \left(\left[-T,\;T\right],\;H_{p}^{s} (\R^{n})\right).
\end{equation}
If $\left\| u_{0} \right\| _{\dot{H}^{s}(\R^{n}) } $ is sufficiently small, then the above solution is a global one and
\begin{equation} \label{GrindEQ__1_11_}
\left\| u\right\| _{L^{\gamma \left(p\right)} \left(\R,\;\dot{H}_{p}^{s} (\R^{n})\right)} \lesssim\left\| u\right\| _{\dot{H}^{s}(\R^{n}) } ,
\end{equation}
\begin{equation} \label{GrindEQ__1_12_}
\left\| u\right\| _{L^{\gamma \left(p\right)} \left(\R,\;H_{p}^{s} (\R^{n})\right)} \lesssim\left\| u\right\| _{H^{s} (\R^{n})} ,
\end{equation}
for any admissible pair $\left(\gamma \left(p\right),\;p\right)$. Furthermore, there exist $u_{0}^{\pm } \in H^{s} (\R^{n} )$ such that
\begin{equation} \label{GrindEQ__1_13_}
{\mathop{\lim }\limits_{t\to \pm \infty }} \left\| u\left(t\right)-e^{it\Delta } u_{0}^{\pm } \right\| _{H^{s} (\R^{n} )} =0.
\end{equation}
\end{thm}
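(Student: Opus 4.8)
The plan is to solve the integral equation \eqref{GrindEQ__1_3_} by the contraction mapping principle in a Strichartz space built around the critical admissible pair $(\gamma(r),r)$ with $r$ given by \eqref{GrindEQ__1_9_}. Writing $\mathcal T u(t)=S(t)u_0-i\int_0^t S(t-\tau)|x|^{-b}f(u(\tau))\,d\tau$, I would fix $M>0$ and $T>0$ and work on the ball
\[ X=\left\{u:\ \|u\|_{L^{\gamma(r)}([-T,T],\dot H^s_r)}\le M,\ \ \|u\|_{L^{\gamma(r)}([-T,T],H^s_r)}<\infty\right\}, \]
endowed with the weaker complete metric $d(u,v)=\|u-v\|_{L^{\gamma(r)}([-T,T],L^r)}$ (so that the ball is closed and complete). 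The goal is to show that $\mathcal T:X\to X$ is a well-defined contraction; its unique fixed point is the desired solution, and the gain \eqref{GrindEQ__1_10_} for every admissible pair then follows by applying the inhomogeneous Strichartz estimate once more with the left-hand exponent replaced by an arbitrary admissible $(\gamma(p),p)$.

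The two analytic ingredients are the Strichartz estimates and a nonlinear estimate for the weighted term $|x|^{-b}f(u)$. For the first I would invoke the homogeneous and inhomogeneous Strichartz estimates in the $\dot H^s_p$ and $H^s_p$ scale, namely $\|S(\cdot)u_0\|_{L^{\gamma(p)}_t\dot H^s_p}\lesssim\|u_0\|_{\dot H^s}$ together with the dual-pair bound for the Duhamel integral. For the second — the heart of the argument — I would estimate $\||x|^{-b}f(u)\|_{L^{\gamma(q)'}_T\dot H^s_{q'}}$ by the product of $\sigma$ copies of a critical norm of $u$ and one copy of $\|u\|_{L^{\gamma(r)}_T\dot H^s_r}$, and likewise bound the difference $|x|^{-b}(f(u)-f(v))$. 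This is precisely where membership of $f$ in the class ${\rm X}(\sigma,s,b)$ (supplying the fractional Leibniz/chain rule for $f$) and the fractional Hardy inequality (to absorb the singular weight $|x|^{-b}$ into the $\dot H^s$ derivatives) are used; the competition of exponents between these two tools is what forces the restrictions $b<\frac{4s}{n}$ when $s\in\N$ and $b<\frac{6s}{n}-1$ when $s\notin\N$, $n\ge4$.

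Combining the two ingredients, $\mathcal T$ satisfies $\|\mathcal T u\|_{L^{\gamma(r)}_T\dot H^s_r}\le C\|S(\cdot)u_0\|_{L^{\gamma(r)}_T\dot H^s_r}+CM^{\sigma+1}$ and a matching contraction bound $d(\mathcal Tu,\mathcal Tv)\le CM^{\sigma}d(u,v)$. The defining feature of the critical case is that Hölder's inequality in time produces no positive power of $T$, so smallness cannot be extracted by shrinking $T$ in the nonlinear term. Instead I would exploit that $S(\cdot)u_0\in L^{\gamma(r)}(\R,\dot H^s_r)$ by the homogeneous Strichartz estimate, whence $\|S(\cdot)u_0\|_{L^{\gamma(r)}([-T,T],\dot H^s_r)}\to0$ as $T\to0$ by absolute continuity of the integral; choosing first $M$ small with $CM^{\sigma}<\tfrac12$ and then $T$ small enough that the free term is $\le M/(2C)$ closes both the self-mapping and the contraction. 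This yields local well-posedness, and the Strichartz bounds in the full $H^s_p$ scale give \eqref{GrindEQ__1_10_}. For small data one replaces the shrinking-$T$ step by the global bound $\|S(\cdot)u_0\|_{L^{\gamma(r)}(\R,\dot H^s_r)}\lesssim\|u_0\|_{\dot H^s}$, so when $\|u_0\|_{\dot H^s}$ is small the contraction runs on $T=\infty$, giving the global solution and the uniform Strichartz bounds \eqref{GrindEQ__1_11_}--\eqref{GrindEQ__1_12_}.

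Scattering follows from these global bounds. Setting $u_0^{\pm}=u_0-i\int_0^{\pm\infty}S(-\tau)|x|^{-b}f(u(\tau))\,d\tau$, the nonlinear estimate together with $u\in L^{\gamma(r)}(\R,H^s_r)$ shows that the improper integral converges in $H^s(\R^n)$, so $u_0^\pm\in H^s(\R^n)$; then $u(t)-S(t)u_0^\pm=-i\int_t^{\pm\infty}S(t-\tau)|x|^{-b}f(u(\tau))\,d\tau$, and applying the inhomogeneous Strichartz estimate over $[t,\pm\infty)$ bounds its $H^s$-norm by a tail of the finite global nonlinear norm, which vanishes as $t\to\pm\infty$, giving \eqref{GrindEQ__1_13_}. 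The main obstacle throughout is the nonlinear estimate of the second paragraph: reconciling the fractional Hardy inequality for the weight $|x|^{-b}$ with the fractional differentiation of $f(u)$ at the scaling-critical exponents, since this is the only place where the hypotheses on $b$ are consumed and where the critical (as opposed to subcritical) balance must be respected exactly.
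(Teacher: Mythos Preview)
Your proposal is correct and follows essentially the same approach as the paper: a contraction on a Strichartz ball built on the admissible pair $(\gamma(r),r)$ of \eqref{GrindEQ__1_9_}, with the weighted nonlinear estimate supplied by the fractional Hardy inequality plus fractional Leibniz/chain rule (the paper packages these as Lemmas \ref{lem 3.1.}, \ref{lem 3.2.}, \ref{lem 3.4.} and then Lemma \ref{lem 4.1.}), and with smallness in the critical local problem obtained from the absolute continuity $\|S(\cdot)u_0\|_{L^{\gamma(r)}_T}\to 0$ rather than a power of $T$. The only cosmetic differences are that the paper takes the local ball bounded in the full $H^s_r$ norm (rather than in $\dot H^s_r$ with merely finite $H^s_r$) and phrases scattering via the Cauchy property of $e^{-it\Delta}u(t)$ in $H^s$; both are equivalent to your formulation.
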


In Theorem \ref{thm 1.3.}, we didn't treat the case $s\notin \N$ and $n=3$. In this case, we have the following result.

\begin{thm}\label{thm 1.4.}
Let $1<s<\frac{3}{2} $, $0<b<{\rm 1}$, $\sigma =\frac{4-2b}{3-2s} $ and $0<\varepsilon <\min \left\{1-b,\; \frac{2s-1}{2} \right\}$. Assume that $f$ is of class ${\rm X} \left(\sigma ,s,b\right)$. Then for any $u_{0} \in H^{s} \left(\R^{3} \right)$, there exists $T=T\left(u_{0} \right)>0$ such that \eqref{GrindEQ__1_1_} has a unique solution
\begin{equation} \label{GrindEQ__1_14_}
u\in L^{\gamma \left(r\right)} \left(\left[-T,\;T\right],\;H_{r}^{s} \left(\R^{3} \right)\right),
\end{equation}
where $\left(\gamma \left(r\right),\;r\right)$ is an admissible pair satisfying
\begin{equation} \label{GrindEQ__1_15_}
\frac{1}{r} =\frac{1}{2} -\frac{5-2s+2\varepsilon }{6\left(\sigma +1\right)} .
\end{equation}
If $\left\| u_{0} \right\| _{\dot{H}^{s}(\R^{3})}$ is sufficiently small, then the above solution is global and scatters.
\end{thm}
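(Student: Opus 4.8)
The plan is to solve the integral equation \eqref{GrindEQ__1_3_} by the contraction mapping principle, following the same scheme as for Theorem \ref{thm 1.3.} but with the admissible pair $(\gamma(r),r)$ determined by \eqref{GrindEQ__1_15_} in place of \eqref{GrindEQ__1_9_}. First I would fix an auxiliary admissible pair $(\gamma(m),m)$ and, on a time interval $I$ (with $I=[-T,T]$ for the local statement and $I=\R$ for the global one), set up the complete metric space
\[
E=\left\{u\in L^{\gamma(r)}(I,H_{r}^{s}(\R^{3})):\ \left\|u\right\|_{L^{\gamma(r)}(I,H_{r}^{s})}\le M\right\},\qquad d(u,v)=\left\|u-v\right\|_{L^{\gamma(r)}(I,L^{r})},
\]
so that regularity is carried in the strong norm while the contraction is measured in the scaling-critical norm without derivatives (the Kato--Cazenave--Weissler device), and I would study the map $\Phi(u)(t)=S(t)u_{0}-i\int_{0}^{t}S(t-\tau)|x|^{-b}f(u(\tau))\,d\tau$ on $E$.

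Applying the homogeneous and inhomogeneous Strichartz estimates reduces the self-mapping and contraction properties of $\Phi$ to the nonlinear estimate
\[
\left\||x|^{-b}f(u)\right\|_{L^{\gamma(m)'}(I,\dot{H}_{m'}^{s})}\lesssim\left\|u\right\|_{L^{\gamma(r)}(I,\dot{H}_{r}^{s})}^{\sigma+1}
\]
and its Lipschitz counterpart, in which the difference is estimated at the $L^{m'}$ level (no derivatives) via $|f(u)-f(v)|\lesssim(|u|^{\sigma}+|v|^{\sigma})|u-v|$. This is where the hypotheses are spent: one writes $(-\Delta)^{s/2}(|x|^{-b}f(u))$, distributes the $s$ fractional derivatives by the fractional Leibniz rule, treats the factor coming from $f(u)\sim|u|^{\sigma}u$ with the fractional chain rule (available since $f\in C^{\lceil s\rceil}=C^{2}$ and $|f^{(k)}|\lesssim|u|^{\sigma+1-k}$), and absorbs the singular weight by the fractional Hardy inequality $\||x|^{-b}g\|_{L^{\rho}}\lesssim\|g\|_{\dot{H}_{\rho}^{b}}$. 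The exponent $r$ of \eqref{GrindEQ__1_15_} is perturbed by $\varepsilon$ exactly so that the Hardy inequality (which requires $0<b<3/\rho$), the intermediate Sobolev embeddings $\dot{H}_{r}^{s}\hookrightarrow L^{\cdot}$, and the Hölder balances in space and time all hold strictly rather than at the forbidden endpoints that appear when $n=3$ and $s\notin\N$; the constraint $0<\varepsilon<\min\{1-b,\tfrac{2s-1}{2}\}$ is what keeps these exponents in their admissible ranges.

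Granting these estimates, the contraction closes in the usual way. For the local statement the homogeneous term satisfies $\|S(\cdot)u_{0}\|_{L^{\gamma(r)}(\R,\dot{H}_{r}^{s})}<\infty$ by Strichartz, hence $\|S(\cdot)u_{0}\|_{L^{\gamma(r)}([-T,T],\dot{H}_{r}^{s})}\to0$ as $T\to0$ by dominated convergence; choosing $T=T(u_{0})$ small makes $\Phi$ a contraction of a small ball of $E$ into itself, which produces the unique solution \eqref{GrindEQ__1_14_}. Feeding this solution back into the Strichartz estimates then places $u$ in $L^{\gamma(p)}(I,H_{p}^{s})$ for every admissible pair $(\gamma(p),p)$.

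For small data, $\|S(\cdot)u_{0}\|_{L^{\gamma(r)}(\R,\dot{H}_{r}^{s})}\lesssim\|u_{0}\|_{\dot{H}^{s}}$ is small, so the identical contraction runs on $I=\R$ and yields a global solution; the propagation of the full $H^{s}$ norm follows from a companion estimate carrying $\sigma$ critical (small) factors and one full-norm factor. Scattering then follows by a Cauchy criterion: since $S(-t_{2})u(t_{2})-S(-t_{1})u(t_{1})=-i\int_{t_{1}}^{t_{2}}S(-\tau)|x|^{-b}f(u(\tau))\,d\tau$ and the nonlinear Strichartz norm is now finite over all of $\R$, the right-hand side tends to $0$ in $H^{s}$ as $t_{1},t_{2}\to\pm\infty$, so the limits $u_{0}^{\pm}=u_{0}-i\int_{0}^{\pm\infty}S(-\tau)|x|^{-b}f(u(\tau))\,d\tau$ exist in $H^{s}$ and give the claimed scattering. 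The main obstacle throughout is the nonlinear estimate in the non-integer range $1<s<\tfrac{3}{2}$ with $n=3$, where the fractional Hardy weight and the fractional chain rule impose competing constraints on the Hölder exponents that can be reconciled only through the $\varepsilon$-perturbed choice of $r$.
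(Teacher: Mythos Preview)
Your proposal is correct and follows the paper's approach: the paper fixes the auxiliary admissible exponent $p=\frac{3}{2-s+\varepsilon}$ (your $m$), checks that the scaling relations \eqref{GrindEQ__4_5_}, \eqref{GrindEQ__4_7_} and the hypothesis $\frac{1}{p'}+\frac{\lceil s\rceil-s}{3}=1-\frac{\varepsilon}{3}<1$ of Lemma~\ref{lem 3.2.} all hold, derives the nonlinear estimates \eqref{GrindEQ__4_1_}--\eqref{GrindEQ__4_3_} from that lemma, and then repeats the contraction and scattering argument of Theorem~\ref{thm 1.3.} verbatim. The only point you leave implicit is that the specific endpoint obstruction at $n=3$, $s\notin\N$ is precisely this last inequality (it fails for the choice $p=\frac{2n}{n-2}$ used in Theorem~\ref{thm 1.3.}), and the $\varepsilon$-shift of $p$, and hence of $r$, is engineered exactly to restore it.
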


The rest of this paper is organized as follows. In Section 2, we introduce some basic notation and recall some useful facts which are used in this paper. In Section 3, we establish the nonlinear estimates. In Section 4, we prove Theorem \ref{thm 1.3.} and \ref{thm 1.4.}.

\noindent
\section{Preliminaries}
First of all, let us introduce some basic notation. In this paper, $\C$, $\R$ and $\N$ will stand for the sets of complex, real and natural numbers, respectively. $C\left(>0\right)$ stands for the universal constant, which can be different at different places. We denote $a\lesssim b$ if $a\le Cb$ for some constant $C>0$. In addition, we write $a\sim b$ if $a\lesssim b \lesssim a$. $F$ denotes the Fourier transform; $F^{-1} $ denotes the inverse Fourier transform. We denote by $p'$ the dual number of $p\in [1,\;\infty ]$, i.e. $1/p+1/p'=1$. For $s\in \R$, we denote by $\left[s\right]$ the largest integer which is less than or equals to $s$ and by $\left\lceil s\right\rceil $ the minimal integer which is larger than or equals to $s$. For a multi-index $\alpha =\left(\alpha _{1} ,\;\alpha _{2} \;,\;\ldots ,\;\alpha _{n} \right)$, denote
\[D^{\alpha } =\partial _{x_{1} }^{\alpha _{1} } \cdots \partial _{x_{n} }^{\alpha _{n} },~ \left|\alpha \right|=\left|\alpha _{1} \right|+\cdots +\;\left|\alpha _{n} \right|.\]
For a function $f(z)$ defined for a complex variable $z$ and for a positive integer $k$, $k$-th order complex derivative of $f(z)$ is defined by
\[f^{(k)} (z)=\left(\frac{\partial ^{k} f}{\partial z^{k} } ,\; \frac{\partial ^{k} f}{\partial z^{k-1} \partial \bar{z}} ,\; ...,\; \frac{\partial ^{k} f}{\partial z\partial \bar{z}^{k-1} } ,\; \frac{\partial ^{k} f}{\partial \bar{z}^{k} } \right),\]
where
\[\frac{\partial f}{\partial z} =\frac{1}{2} \left(\frac{\partial f}{\partial x} -i\frac{\partial f}{\partial y} \right),\; \frac{\partial f}{\partial \bar{z}} =\frac{1}{2} \left(\frac{\partial f}{\partial x} +i\frac{\partial f}{\partial y} \right).\]
We also define its norm as
\[\left|f^{\left(k\right)} (z)\right|=\sum _{i=0}^{k}\left|\frac{\partial ^{k} f}{\partial z^{k-i} \partial \bar{z}^{i} } \right| .\]
As in \cite{G08}, for $0<p,\; q\le \infty $, we denote by $L^{p}(\R^{n} )$ and $L^{p,q} (\R^{n} )$ the Lebesgue space and Lorentz space, respectively. As in \cite{WHHG11}, for $s\in \R$ and $1<p<\infty $, we define the norms of nonhomogeneous Sobolev space $H_{p}^{s}(\R^{n} )$ and homogeneous Sobolev space $\dot{H}_{p}^{s} (\R^{n} )$, respectively, by
\[{\rm \; \; }\left\| f\right\| _{H_{p}^{s} (\R^{n} )} =\left\| F^{-1} \left(1+\left|\xi \right|^{2} \right)^{\frac{s}{2} } Ff\right\| _{L^{p} (\R^{n} )} , ~\left\| f\right\| _{\dot{H}_{p}^{s} (\R^{n} )} =\left\| F^{-1} \left|\xi \right|^{s} Ff\right\| _{L^{p} (\R^{n} )} ,\]
We shall abbreviate $H_{2}^{s}(\R^{n} )$ and $\dot{H}_{2}^{s}(\R^{n} )$ as $H^{s}(\R^{n})$ and $\dot{H}^{s} (\R^{n} )$, respectively.
We shall also use the space-time mixed space $L^{\gamma } (I,\;X(\R^{n} ))$ whose norm is defined by
\[\left\|f\right\|_{L^{\gamma } (I,\;X(\R^{n} ))} =\left(\int _{I}\left\| f\right\| _{X(\R^{n} )}^{\gamma } dt \right)^{\frac{1}{\gamma } } ,\]
with the usual modification when $\gamma=\infty$, where $I\subset \R$ is an interval and $X(\R^{n} )$ is a normed space on $\R^{n}$. If there is no confusion, $\R^{n} $ will be omitted in various function spaces. For two normed spaces $X$ and $Y$, $X\subset Y$ means that the space $X$ is continuously embedded in the space $Y$, that is, there exists a constant $C\left(>0\right)$ such that $\left\| f\right\| _{Y} \le C\left\| f\right\| _{X} $ for all $f\in X$.

Next, we recall some useful facts and estimates.

\begin{lem}[\cite{AK21}]\label{lem 2.1.}
Let $s>0$, $1<p<\infty $ and $v=s-\left[s\right]$. Then we have
\[\left\| f\right\| _{\dot{H}_{p}^{s} (\R^{n})} \sim \sum _{\left|\alpha \right|=\left[s\right]}\left\| D^{\alpha } f\right\| _{\dot{H}_{p}^{v} (\R^{n})}  .\]
\end{lem}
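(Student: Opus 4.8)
The plan is to reduce everything to the $L^{p}$-boundedness of certain zero-homogeneous Fourier multipliers via the Mikhlin--H\"{o}rmander multiplier theorem. Writing $m=\left[s\right]$ for brevity, recall that by definition $\left\| g\right\| _{\dot{H}_{p}^{v}}=\left\| F^{-1}|\xi|^{v}Fg\right\| _{L^{p}}$ and that $F\left(D^{\alpha }f\right)=(i\xi)^{\alpha }Ff$, so that for each multi-index $\alpha$,
\[\left\| D^{\alpha }f\right\| _{\dot{H}_{p}^{v}}=\bigl\| F^{-1}\,|\xi|^{v}(i\xi)^{\alpha }\,Ff\bigr\| _{L^{p}}.\]
Since $|\xi|^{s}=|\xi|^{v}|\xi|^{m}$, the equivalence to be proved is, after pulling out $|\xi|^{v}$, a comparison between $|\xi|^{m}Ff$ and the family $\{\xi^{\alpha }Ff\}_{|\alpha|=m}$ in $L^{p}$; the key point is that the ratios between these symbols are smooth functions of $\xi/|\xi|$.

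For the bound $\sum _{|\alpha|=m}\left\| D^{\alpha }f\right\| _{\dot{H}_{p}^{v}}\lesssim\left\| f\right\| _{\dot{H}_{p}^{s}}$ I would set $g=F^{-1}|\xi|^{s}Ff$, so that $\left\| g\right\| _{L^{p}}=\left\| f\right\| _{\dot{H}_{p}^{s}}$ and $Ff=|\xi|^{-s}Fg$. Then for $|\alpha|=m$,
\[F^{-1}|\xi|^{v}(i\xi)^{\alpha }Ff=F^{-1}\,m_{\alpha }(\xi)\,Fg,\qquad m_{\alpha }(\xi)=\frac{(i\xi)^{\alpha }}{|\xi|^{m}}.\]
Each $m_{\alpha }$ is homogeneous of degree $0$ and $C^{\infty }$ away from the origin, hence satisfies the Mikhlin--H\"{o}rmander condition $|\partial _{\xi }^{\beta }m_{\alpha }(\xi)|\lesssim|\xi|^{-|\beta|}$; by the multiplier theorem it is bounded on $L^{p}$ for $1<p<\infty$, which gives $\left\| D^{\alpha }f\right\| _{\dot{H}_{p}^{v}}\lesssim\left\| g\right\| _{L^{p}}=\left\| f\right\| _{\dot{H}_{p}^{s}}$, and summing over the finitely many $\alpha$ finishes this direction.

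For the reverse inequality I would exploit the multinomial identity $|\xi|^{2m}=(\xi_{1}^{2}+\cdots+\xi_{n}^{2})^{m}=\sum _{|\alpha|=m}\binom{m}{\alpha}\xi^{2\alpha}$ to write
\[|\xi|^{s}Ff=\sum _{|\alpha|=m}\binom{m}{\alpha}\frac{\xi^{\alpha }}{|\xi|^{m}}\Bigl(|\xi|^{v}\xi^{\alpha }Ff\Bigr).\]
Setting $h_{\alpha }=F^{-1}|\xi|^{v}(i\xi)^{\alpha }Ff$, so that $\left\| h_{\alpha }\right\| _{L^{p}}=\left\| D^{\alpha }f\right\| _{\dot{H}_{p}^{v}}$, each factor $|\xi|^{v}\xi^{\alpha }Ff$ equals $i^{-|\alpha|}Fh_{\alpha }$, while the remaining symbol $\xi^{\alpha }/|\xi|^{m}$ is again a zero-homogeneous, smooth-off-zero Mikhlin multiplier. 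Applying $F^{-1}$, taking $L^{p}$ norms, and invoking the multiplier theorem once more yields $\left\| f\right\| _{\dot{H}_{p}^{s}}\lesssim\sum _{|\alpha|=m}\left\| h_{\alpha }\right\| _{L^{p}}=\sum _{|\alpha|=m}\left\| D^{\alpha }f\right\| _{\dot{H}_{p}^{v}}$.

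The two multiplier applications are routine; the one point demanding care is the functional-analytic setting. The homogeneous norm $\left\| \cdot\right\| _{\dot{H}_{p}^{s}}$ is only a norm on tempered distributions modulo polynomials, so I would first establish all the symbol identities on a dense class (for instance Schwartz functions whose Fourier transform is supported away from the origin, or the homogeneous Sobolev space realized as in \cite{WHHG11}), where multiplication by $|\xi|^{-s}$ and by the $m_{\alpha }$ is unambiguous, and then pass to the limit. Controlling the singularity of these symbols at $\xi=0$ within the homogeneous scale is the main obstacle; once this is handled, the Mikhlin--H\"{o}rmander theorem delivers both inequalities.
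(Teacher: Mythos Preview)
The paper does not supply a proof of this lemma; it is simply quoted from the reference \cite{AK21}, so there is no argument in the paper to compare against. Your approach via the Mikhlin--H\"{o}rmander multiplier theorem is the standard one and is correct: the symbols $(i\xi)^{\alpha}/|\xi|^{m}$ are smooth and homogeneous of degree zero on $\R^{n}\setminus\{0\}$, hence $L^{p}$-bounded multipliers for $1<p<\infty$, which gives the first direction, and the multinomial decomposition $|\xi|^{2m}=\sum_{|\alpha|=m}\binom{m}{\alpha}\xi^{2\alpha}$ gives the reverse inequality in the same way. Your closing remark about working first on a dense class (Schwartz functions with Fourier support away from the origin) and then passing to the limit is the right way to handle the usual ambiguity of homogeneous Sobolev spaces modulo polynomials; this is routine and does not present a genuine obstacle here.
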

The following lemma is the well-known fractional product rule. See, for example, \cite{CW91, AK21}.

\begin{lem}\label{lem 2.2.}
Let $s\ge 0$, $1<r,\;r_{2} ,\;p_{1} <\infty $, $1<r_{1} ,\;p_{2} \le \infty $. Assume that
\[\frac{1}{r} =\frac{1}{r_{i} } +\frac{1}{p_{i} }~(i=1,\;2).\]
Then we have
\[\left\| fg\right\| _{\dot{H}_{r}^{s} } \lesssim\left\| f\right\| _{r_{1} } \left\| g\right\| _{\dot{H}_{p_{1} }^{s} } +\left\| f\right\| _{\dot{H}_{r_{2} }^{s} } \left\| g\right\| _{p_{2} } .\]
\end{lem}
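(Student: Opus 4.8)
The plan is to establish this fractional Leibniz rule by Littlewood--Paley theory together with Bony's paraproduct decomposition, which is the standard route (cf. \cite{CW91}). First I would dispose of the trivial case $s=0$, in which the claimed inequality is nothing but H\"older's inequality. For $s>0$ I would use the square-function characterization of the homogeneous Sobolev norm, valid for $1<r<\infty$,
\[
\left\| h\right\| _{\dot{H}_{r}^{s}} \sim \left\| \Big(\sum_{j\in\Z} 2^{2js}\left|P_{j} h\right|^{2}\Big)^{1/2}\right\| _{r},
\]
where $\{P_{j}\}$ is a Littlewood--Paley family localizing to frequencies $\left|\xi\right|\sim 2^{j}$. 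Writing $f=\sum_{j}P_{j}f$ and $g=\sum_{k}P_{k}g$, I would split the product by Bony's decomposition into the low-high, high-low and high-high pieces, according to whether $j\ll k$, $j\gg k$ or $\left|j-k\right|\le 3$.

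The two unbalanced pieces are the easy ones. In the low-high piece the output frequency is $\sim 2^{k}$, so applying $P_{\ell}$ forces $\ell\sim k$ and the weight $2^{\ell s}$ can be transferred onto $g$; bounding the low-frequency factor $\sum_{j\ll k}P_{j}f$ pointwise by the Hardy--Littlewood maximal function $Mf$ and invoking the Fefferman--Stein vector-valued maximal inequality to remove the outer $P_{\ell}$, followed by H\"older with $\frac{1}{r}=\frac{1}{r_{1}}+\frac{1}{p_{1}}$, this piece is controlled by $\left\| f\right\|_{r_{1}}\left\| g\right\|_{\dot{H}^{s}_{p_{1}}}$. The high-low piece is symmetric and yields $\left\| f\right\|_{\dot{H}^{s}_{r_{2}}}\left\| g\right\|_{p_{2}}$, with the roles of the two factors and of the exponent pair interchanged. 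The endpoints $r_{1}=\infty$ and $p_{2}=\infty$ are admissible precisely because the un-differentiated factor is only needed in $L^{\infty}$, where $M$ acts trivially.

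The main obstacle is the high-high piece $\sum_{\left|j-k\right|\le 3}P_{j}f\,P_{k}g$, where both inputs sit at comparable high frequency but the \emph{output} frequency $2^{\ell}$ may be far smaller than $2^{j}$, so the derivative cannot simply be placed on the correct factor. Here I would exploit $s>0$ by writing $2^{\ell s}=2^{(\ell-j)s}2^{js}$, so that for $\ell\le j$ the gain $2^{(\ell-j)s}$ decays geometrically, and estimating
\[
2^{\ell s}\left|P_{\ell}\Big(\sum_{j\gtrsim\ell}P_{j}f\,\widetilde{P}_{j}g\Big)\right|\lesssim \sum_{j\gtrsim\ell}2^{(\ell-j)s}\,2^{js}\left|P_{j}f\right|\,M(\widetilde{P}_{j}g).
\]
Taking the $\ell^{2}_{\ell}$ norm, I would apply Young's convolution inequality in the $\ell$-variable (legitimate since $\{2^{\ell s}\mathbf{1}_{\ell\le 0}\}\in\ell^{1}$ exactly when $s>0$), then an $\ell^{2}$--$\ell^{\infty}$ pairing together with the pointwise bound $M(\widetilde{P}_{j}g)\lesssim M^{2}g$ and H\"older to arrive at $\left\| f\right\|_{\dot{H}^{s}_{r_{2}}}\left\| g\right\|_{p_{2}}$ (distributing the derivative to $g$ instead would give the first term). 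Collecting the three estimates yields the stated bound. The delicate points to verify are the $\ell^{2}$-summability in this high-high step, which is where $s>0$ enters, and the applicability of the square-function and maximal inequalities at the relevant exponents, which is where the hypotheses $1<r,r_{2},p_{1}<\infty$ (for the square functions) and $1<r_{1},p_{2}\le\infty$ (for boundedness of $M$ on the un-differentiated factor) are used.
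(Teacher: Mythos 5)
The paper contains no proof of Lemma \ref{lem 2.2.}: it is quoted as the well-known fractional product rule with a pointer to the literature (\cite{CW91}, \cite{AK21}), so the only meaningful comparison is with the standard proof in those sources --- which is exactly the route you take. Your outline (square-function characterization of $\dot{H}^{s}_{r}$, Bony decomposition, maximal-function control of the two unbalanced paraproducts, geometric summation in the balanced piece using $s>0$) is the correct and standard argument, and you account accurately for where each hypothesis enters: $1<r,\;r_{2},\;p_{1}<\infty$ for the Littlewood--Paley equivalence and the Fefferman--Stein inequality, $1<r_{1},\;p_{2}\le\infty$ only for boundedness of $M$ on the undifferentiated factor, and $s>0$ for the $\ell^{1}$-summability of $2^{(\ell-j)s}$ in the high-high term. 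One local step is written incorrectly, though it is harmless: the displayed pointwise bound
\[
2^{\ell s}\Bigl|P_{\ell}\Bigl(\sum_{j\gtrsim\ell}P_{j}f\,\widetilde{P}_{j}g\Bigr)\Bigr|\lesssim\sum_{j\gtrsim\ell}2^{(\ell-j)s}\,2^{js}\bigl|P_{j}f\bigr|\,M\bigl(\widetilde{P}_{j}g\bigr)
\]
is not legitimate as stated, because $P_{\ell}$ is a convolution operator and you cannot freeze the factor $P_{j}f$ at the output point; the maximal function must be applied to the whole product, i.e. $2^{\ell s}\bigl|P_{\ell}(P_{j}f\,\widetilde{P}_{j}g)\bigr|\le 2^{(\ell-j)s}\,M\bigl(2^{js}P_{j}f\,\widetilde{P}_{j}g\bigr)$. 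The repair is the order of operations you already use in the unbalanced pieces: first Young's inequality in the $\ell$-variable (using $s>0$), then the vector-valued Fefferman--Stein inequality to remove $M$, then the uniform pointwise bound $|\widetilde{P}_{j}g|\lesssim Mg$ followed by H\"older with $\frac{1}{r}=\frac{1}{r_{2}}+\frac{1}{p_{2}}$ and boundedness of $M$ on $L^{p_{2}}$. With that reordering your proof is complete and coincides with the Christ--Weinstein argument the paper cites.
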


\begin{lem}[\cite{WHHG11}]\label{lem 2.3.}
Let $-\infty <s_{2} \le s_{1} <\infty $ and $1<p_{1} \le p_{2} <\infty $ with $s_{1} -\frac{n}{p_{1} } =s_{2} -\frac{n}{p_{2} } $. Then there holds the embedding $\dot{H}_{p_{1} }^{s_{1} } \subset \dot{H}_{p_{2} }^{s_{2} } $.
\end{lem}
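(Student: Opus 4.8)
The plan is to identify this embedding with a mapping property of the Riesz potential and then invoke the Hardy--Littlewood--Sobolev (fractional integration) theorem. First I would dispose of the degenerate case: if $s_1=s_2$, then the scaling hypothesis $s_1-\frac{n}{p_1}=s_2-\frac{n}{p_2}$ forces $p_1=p_2$, so the asserted embedding is the identity and there is nothing to prove. Hence I may assume $s_2<s_1$ and set $\alpha=s_1-s_2>0$. Note that the scaling relation then reads $\frac{n}{p_1}-\frac{n}{p_2}=\alpha>0$, which already yields $p_1<p_2$, consistent with the hypothesis.

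Next I would reformulate the claim in terms of the operator $|\nabla|^{\beta}=F^{-1}|\xi|^{\beta}F$. For $f$ in a suitable dense subclass, put $g=|\nabla|^{s_1}f$, so that by the definition of the norm $\left\|f\right\|_{\dot{H}_{p_1}^{s_1}}=\left\|g\right\|_{L^{p_1}}$. Since $|\nabla|^{s_2}=|\nabla|^{-\alpha}|\nabla|^{s_1}$ as Fourier multipliers, one has $|\nabla|^{s_2}f=|\nabla|^{-\alpha}g=I_{\alpha}g$, where $I_{\alpha}$ denotes the Riesz potential of order $\alpha$, i.e. the multiplier operator with symbol $|\xi|^{-\alpha}$ (equivalently, convolution with a constant multiple of $|x|^{-(n-\alpha)}$). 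Thus the embedding is equivalent to the bound $\left\|I_{\alpha}g\right\|_{L^{p_2}}\lesssim\left\|g\right\|_{L^{p_1}}$.

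Then I would check that the hypotheses match those of the Hardy--Littlewood--Sobolev theorem. Since $1<p_1<\infty$ we have $\frac{1}{p_1}<1$, and therefore $\alpha=n\left(\frac{1}{p_1}-\frac{1}{p_2}\right)$ satisfies $0<\alpha<n$. Rewriting the scaling hypothesis as $\frac{1}{p_2}=\frac{1}{p_1}-\frac{\alpha}{n}$, these are precisely the conditions under which $I_{\alpha}\colon L^{p_1}\to L^{p_2}$ is bounded for $1<p_1<p_2<\infty$. Applying the theorem gives $\left\|I_{\alpha}g\right\|_{L^{p_2}}\lesssim\left\|g\right\|_{L^{p_1}}$, that is, $\left\|f\right\|_{\dot{H}_{p_2}^{s_2}}\lesssim\left\|f\right\|_{\dot{H}_{p_1}^{s_1}}$ on the dense subclass; a density/continuity argument then extends the inequality to all of $\dot{H}_{p_1}^{s_1}$, which is exactly the asserted continuous embedding.

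The main obstacle is not the core estimate, which is a textbook application of HLS, but the functional-analytic bookkeeping for the homogeneous spaces. One must work on a dense subclass (for instance $F^{-1}C_c^{\infty}(\R^n\setminus\{0\})$, or Schwartz functions with frequency support away from the origin) on which every multiplier identity is literal, verify that $|\nabla|^{\beta}$ maps this subclass into itself and that $I_{\alpha}$ there coincides with the Riesz convolution, and only then pass to the completion defining $\dot{H}_{p_i}^{s_i}$. Care is also needed because homogeneous Sobolev spaces are properly understood modulo polynomials; restricting to the above subclass sidesteps this ambiguity and makes the chain of equalities $|\nabla|^{s_2}f=I_{\alpha}|\nabla|^{s_1}f$ rigorous.
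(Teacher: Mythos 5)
Your proof is correct, but note that the paper itself offers no proof of this lemma: it is quoted verbatim from the monograph \cite{WHHG11}, so the comparison is with the standard literature rather than with an argument in the text. Your route is the classical one via Riesz potentials: the reduction to $s_{2}<s_{1}$ (the case $s_{1}=s_{2}$ forcing $p_{1}=p_{2}$ is indeed vacuous), the identity $|\nabla|^{s_{2}}f=I_{\alpha}|\nabla|^{s_{1}}f$ with $\alpha=s_{1}-s_{2}=n\left(\frac{1}{p_{1}}-\frac{1}{p_{2}}\right)$, the verification that $0<\alpha<n$ (from $p_{1}>1$) and $p_{1}<p_{2}$, and the application of Hardy--Littlewood--Sobolev are all in order; you also correctly flag the only genuine subtleties, namely working on a dense class such as $F^{-1}C_{c}^{\infty}(\R^{n}\setminus\{0\})$ so that the multiplier identities are literal, and the modulo-polynomials ambiguity inherent in the definition $\left\|f\right\|_{\dot{H}_{p}^{s}}=\left\|F^{-1}|\xi|^{s}Ff\right\|_{L^{p}}$. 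The cited monograph, by contrast, develops such embeddings inside its Littlewood--Paley framework (Bernstein inequalities on dyadic frequency blocks together with the lifting property), obtaining the statement as a special case of more general Besov/Triebel--Lizorkin embeddings; that machinery buys uniformity and generality, while your argument buys brevity and self-containedness at the cost of invoking HLS as a black box. One small observation worth recording: the hypothesis $p_{2}<\infty$ is not cosmetic, since the endpoint $I_{\alpha}\colon L^{p_{1}}\to L^{\infty}$ fails, which is exactly why your HLS step, and the lemma itself, stop short of $p_{2}=\infty$.
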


\begin{lem}[Fractional Hardy inequality in Lorentz spaces, \cite{HYZ12}]\label{lem 2.4.}
Let $1<p<\infty$, $0<s<\frac{n}{p}$ and $1\le q \le \infty$. There holds
\[\left\| |x|^{-s} f\right\| _{L^{p,q} } \lesssim\left\|F^{-1} \left|\xi \right|^{s} Ff\right\| _{L^{p,q}}. \]
\end{lem}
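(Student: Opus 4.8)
The plan is to reduce the stated Sobolev--Lorentz estimate to the $L^{\rho}$-boundedness of a single fixed integral operator and then recover the full Lorentz scale by real interpolation. Writing $g=F^{-1}|\xi|^{s}Ff=|\nabla|^{s}f$, we have, for $f$ in the Schwartz class with the general case following by density, $f=I_{s}g$, where $I_{s}$ is the Riesz potential of order $s$ whose convolution kernel is a constant multiple of $|x|^{-(n-s)}$. In these terms the right-hand side of the claimed inequality is exactly $\|g\|_{L^{p,q}}$, while the left-hand side equals $\||x|^{-s}I_{s}g\|_{L^{p,q}}$. Hence it suffices to show that the operator $T:=M_{|x|^{-s}}\circ I_{s}$, i.e. $Tg(x)=|x|^{-s}I_{s}g(x)$, is bounded on $L^{p,q}(\R^{n})$ for every $1\le q\le\infty$.

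First I would establish the endpoint estimates on Lebesgue spaces, namely $T:L^{\rho}\to L^{\rho}$ for each $\rho$ in the open range $1<\rho<n/s$. The kernel of $T$ is $|x|^{-s}|x-y|^{-(n-s)}$, jointly homogeneous of degree $-n$, and the desired bound is precisely the classical (Lebesgue-space) fractional Hardy inequality $\||x|^{-s}h\|_{L^{\rho}}\lesssim\||\nabla|^{s}h\|_{L^{\rho}}$. This is the special case $\alpha=0$, $\beta=\lambda=s$, $q=\rho$ of the Stein--Weiss inequality for the Riesz potential: the admissibility conditions $0<\lambda<n$, $\alpha<n/\rho'$, $\beta<n/\rho$ and $\alpha+\beta\ge0$ collapse here to the single requirement $s<n/\rho$, and the scaling condition, which in the diagonal case $q=\rho$ reads $\alpha+\beta-\lambda=0$, holds automatically since $0+s-s=0$. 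Thus $T$ is bounded on $L^{\rho}$ for every $\rho\in(1,n/s)$.

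To pass to Lorentz spaces I would fix exponents $p_{0},p_{1}$ with $1<p_{0}<p<p_{1}<n/s$; such a choice is possible precisely because the hypothesis $0<s<n/p$ places $p$ strictly inside $(1,n/s)$. On each endpoint $T$ is bounded by the previous step, so by the real interpolation theorem $T$ maps $(L^{p_{0}},L^{p_{1}})_{\theta,q}$ into itself for every $\theta\in(0,1)$ and every $1\le q\le\infty$. Choosing $\theta$ so that $1/p=(1-\theta)/p_{0}+\theta/p_{1}$ and invoking the identification $(L^{p_{0}},L^{p_{1}})_{\theta,q}=L^{p,q}$, we obtain $\|Tg\|_{L^{p,q}}\lesssim\|g\|_{L^{p,q}}$, which is the assertion.

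The only genuinely delicate points are the verification of the Stein--Weiss hypotheses, where I expect the main work, since one must confirm that the constraint on $\beta=s$ is exactly $s<n/p$ and nothing stronger, and the reduction $f=I_{s}g$, which must be justified on a dense class and then extended by continuity. The homogeneity bookkeeping for the kernel and the real-interpolation identity for Lorentz spaces are otherwise routine.
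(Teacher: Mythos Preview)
The paper does not supply its own proof of this lemma: it is quoted verbatim from \cite{HYZ12} and used as a black box, so there is no argument in the paper to compare against. Your proposal is nonetheless a correct and standard route to the result. The reduction $f=I_{s}g$ with $g=|\nabla|^{s}f$ is legitimate on Schwartz functions (with the usual caveat about the behavior of $|\xi|^{-s}$ near the origin, handled by restricting first to functions whose Fourier transform vanishes near $0$ and then passing to the limit), and the operator $T=M_{|x|^{-s}}\circ I_{s}$ has a kernel homogeneous of degree $-n$, so its $L^{\rho}\to L^{\rho}$ boundedness is exactly the diagonal Stein--Weiss inequality with $\alpha=0$, $\beta=\lambda=s$; the constraint $\beta<n/\rho$ gives precisely $\rho<n/s$, as you note. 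The real-interpolation step $(L^{p_{0}},L^{p_{1}})_{\theta,q}=L^{p,q}$ then upgrades this to the full Lorentz scale for any $p\in(1,n/s)$ and $1\le q\le\infty$. This is essentially the strategy used in the literature (and in \cite{HYZ12} itself, which treats the more general fractional-Laplacian setting), so your argument is both correct and in line with the cited source.
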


Since $L^{p,p}=L^{p}$, we immediately have the following fractional Hardy inequality.
\begin{cor}\label{cor 2.5.}
Let $1<p<\infty$ and $0<s<\frac{n}{p} $. Then we have
\[\left\| |x|^{-s} f\right\| _{L^{p} } \lesssim\left\| f\right\| _{\dot{H}_{p}^{s} } \]
\end{cor}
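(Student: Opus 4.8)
The plan is to derive this corollary directly from Lemma~\ref{lem 2.4.} by specializing the Lorentz exponent $q$ to the value $p$. Lemma~\ref{lem 2.4.} holds for every $1\le q\le\infty$, so in particular it applies with $q=p$, which is legitimate since the hypotheses $1<p<\infty$ and $0<s<\frac{n}{p}$ are exactly those imposed in the corollary. With this choice, Lemma~\ref{lem 2.4.} yields
\[\left\| |x|^{-s} f\right\| _{L^{p,p}} \lesssim \left\|F^{-1} \left|\xi \right|^{s} Ff\right\| _{L^{p,p}}.\]

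The one fact I would invoke to finish is the standard identification of the diagonal Lorentz space with the Lebesgue space, namely $L^{p,p}(\R^{n})=L^{p}(\R^{n})$ with equivalent (indeed, after the usual normalization, equal) norms; this is precisely the remark made just before the corollary. Applying this identification to both sides of the displayed inequality replaces each $L^{p,p}$ norm by the corresponding $L^{p}$ norm, so that
\[\left\| |x|^{-s} f\right\| _{L^{p}} \lesssim \left\|F^{-1} \left|\xi \right|^{s} Ff\right\| _{L^{p}}.\]

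Finally I would recognize the right-hand side as the definition of the homogeneous Sobolev norm introduced in the preliminaries, where $\left\| f\right\| _{\dot{H}_{p}^{s}} =\left\| F^{-1} \left|\xi \right|^{s} Ff\right\| _{L^{p}}$. Substituting this into the last inequality gives the claimed bound $\left\| |x|^{-s} f\right\| _{L^{p}} \lesssim \left\| f\right\| _{\dot{H}_{p}^{s}}$, completing the argument. There is no genuine obstacle here: the corollary is a one-line specialization of Lemma~\ref{lem 2.4.}, and the only point deserving mention is the passage $L^{p,p}=L^{p}$, which is a classical property of Lorentz spaces rather than anything requiring proof in this context.
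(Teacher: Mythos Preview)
Your proposal is correct and follows exactly the paper's approach: the paper simply remarks ``Since $L^{p,p}=L^{p}$, we immediately have the following fractional Hardy inequality,'' and you have spelled out precisely this one-line specialization of Lemma~\ref{lem 2.4.} with $q=p$.
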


We end this section with recalling the well-known Strichartz estimates. For example, see \cite{C03,WHHG11} and the references therein.

\begin{lem}[Strichartz estimates]\label{lem 2.6.}
Let $S(t)=e^{it\Delta } $. Then, for any admissible pairs $(\gamma(p),\;p)$ and $(\gamma (r),\;r)$, we have
\begin{equation} \label{GrindEQ__2_1_}
\left\| S(t)\phi \right\| _{L^{\gamma (p)} (\R,\;\dot{H}_{p}^{s} )} \lesssim\left\| \phi \right\| _{\dot{H}^{s} } ,
\end{equation}
\begin{equation} \label{GrindEQ__2_2_}
\left\| \int _{0}^{t}S(t-\tau )f(\tau )d\tau  \right\| _{L^{\gamma (p)} (\R,\;\dot{H}_{p}^{s} )} \lesssim\left\| f\right\| _{L^{\gamma (r)'} (\R,\;\dot{H}_{r'}^{s} )}.
\end{equation}
\end{lem}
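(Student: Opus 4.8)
The plan is to construct the solution by a fixed-point argument for the Duhamel map
\[\Phi(u)(t)=S(t)u_{0}-i\int_{0}^{t}S(t-\tau)|x|^{-b}f(u(\tau))\,d\tau\]
on a closed ball of the Strichartz space adapted to the single admissible pair $(\gamma(r),r)$ fixed by \eqref{GrindEQ__1_15_}, working on a symmetric interval $I=[-T,T]$. Since $\sigma=\tfrac{4-2b}{3-2s}$ is exactly $H^{s}$-critical, the nonlinear estimate will be scaling invariant, with no positive power of $T$ to spare, so I would follow the Cazenave--Weissler scheme: take the ball in the top-regularity norm $\|u\|_{L^{\gamma(r)}(I,\dot{H}^{s}_{r})}$, carry along the companion norm $\|u\|_{L^{\gamma(r)}(I,H^{s}_{r})}$ to retain the full $H^{s}$ information demanded by \eqref{GrindEQ__1_14_}, but measure the contraction in the weaker metric $\|u-v\|_{L^{\gamma(r)}(I,L^{r})}$. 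The weaker metric avoids differentiating the difference $f(u)-f(v)$ and uses only the pointwise bound $|f(u)-f(v)|\lesssim(|u|^{\sigma}+|v|^{\sigma})|u-v|$ coming from $f\in{\rm X}(\sigma,s,b)$.

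The two inputs are the Strichartz estimates of Lemma \ref{lem 2.6.} for the linear and Duhamel terms, and the scaling-invariant nonlinear estimate
\[\bigl\||x|^{-b}f(u)\bigr\|_{L^{\gamma(r)'}(I,\dot{H}^{s}_{r'})}\lesssim\|u\|_{L^{\gamma(r)}(I,\dot{H}^{s}_{r})}^{\sigma+1},\]
which is the heart of the matter. To prove it I would first apply the fractional product rule (Lemma \ref{lem 2.2.}) to distribute the $s$ derivatives between the singular weight $|x|^{-b}$ and the nonlinearity $f(u)$; the term placing derivatives on the weight is handled by the fractional Hardy inequality in Lorentz spaces (Lemma \ref{lem 2.4.}, or its $L^{p}$ form Corollary \ref{cor 2.5.}), which absorbs the singularity $|x|^{-b}$ at the price of requiring $b<3/p$ for the relevant exponent $p$, while the remaining term is reduced to estimating $\|f(u)\|_{\dot{H}^{s}_{\rho}}$ with the weight placed in a Lebesgue or Lorentz factor. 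Since $1<s<\tfrac32$ is fractional, I would bound $\|f(u)\|_{\dot{H}^{s}_{\rho}}$ through Lemma \ref{lem 2.1.} (reducing to one integer derivative in $\dot{H}^{s-1}_{\rho}$ with $s-1\in(0,\tfrac12)$) followed by a further product rule and the fractional chain rule, using the pointwise bounds $|f^{(k)}(u)|\lesssim|u|^{\sigma+1-k}$ for $0\le k\le\lceil s\rceil=2$ supplied by $f\in{\rm X}(\sigma,s,b)$ together with $\lceil s\rceil\le\sigma+1$ (legitimate since $\sigma>1$ here). Sobolev embedding (Lemma \ref{lem 2.3.}) then moves the auxiliary factors $\|u\|^{\sigma}$ to the scale-critical Lebesgue exponent, and collecting the resulting H\"older exponents in space and time is precisely what dictates the value of $r$ in \eqref{GrindEQ__1_15_} and forces the auxiliary parameter $\varepsilon$.

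With these two ingredients the fixed-point step is routine. Strichartz gives $\|S(t)u_{0}\|_{L^{\gamma(r)}(I,\dot{H}^{s}_{r})}\lesssim\|u_{0}\|_{\dot{H}^{s}}$, and combining the Duhamel Strichartz estimate with the nonlinear estimate above shows that $\Phi$ maps a small ball into itself and contracts in the $L^{\gamma(r)}(I,L^{r})$ metric, which yields the unique solution. For arbitrary $u_{0}\in H^{s}(\R^{3})$ I would produce the local solution \eqref{GrindEQ__1_14_} by first shrinking $T$ until $\|S(t)u_{0}\|_{L^{\gamma(r)}([-T,T],\dot{H}^{s}_{r})}$ falls below the smallness threshold; this is possible by dominated convergence even with $\|u_{0}\|_{\dot{H}^{s}}$ fixed, because $r>2$ forces $\gamma(r)<\infty$. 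For $\|u_{0}\|_{\dot{H}^{s}}$ small the linear term is already globally small, so the contraction runs on $I=\R$ and gives a global solution obeying $\|u\|_{L^{\gamma(r)}(\R,\dot{H}^{s}_{r})}\lesssim\|u_{0}\|_{\dot{H}^{s}}$ together with the propagated companion bound $\|u\|_{L^{\gamma(r)}(\R,H^{s}_{r})}\lesssim\|u_{0}\|_{H^{s}}$. Scattering then follows in the standard way: applying the nonlinear estimate to the time-tails, the family $e^{-it\Delta}u(t)=u_{0}-i\int_{0}^{t}e^{-i\tau\Delta}|x|^{-b}f(u(\tau))\,d\tau$ is Cauchy in $H^{s}(\R^{3})$ as $t\to\pm\infty$, and its limits $u_{0}^{\pm}$ satisfy $\|u(t)-e^{it\Delta}u_{0}^{\pm}\|_{H^{s}}\to0$.

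The main obstacle is this nonlinear estimate under the tight $n=3$, $1<s<\tfrac32$ geometry, and concretely the simultaneous bookkeeping of every exponent it generates. Because the admissible range is only $2\le r\le6$ while $s$ is genuinely fractional, the H\"older and Sobolev exponents produced by the product and chain rules sit near the boundary of admissibility and near the boundary $s<3/p$ of validity of the fractional Hardy inequality. The auxiliary parameter $\varepsilon\in\bigl(0,\min\{1-b,\tfrac{2s-1}{2}\}\bigr)$ supplies exactly the slack needed to push all of these exponents into their open ranges at once, at the cost of a harmless loss of integrability, with the bound $\varepsilon<1-b$ keeping the weight within reach of Lemma \ref{lem 2.4.} and $\varepsilon<\tfrac{2s-1}{2}$ keeping the derivative-carrying exponents legal. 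Verifying that the choice \eqref{GrindEQ__1_15_} makes $(\gamma(r),r)$ admissible, respects the Hardy and Sobolev constraints, and still reproduces the exact critical homogeneity $\sigma+1$ is the delicate computation that distinguishes this case from those already covered by Theorem \ref{thm 1.3.}.
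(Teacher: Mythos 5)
Your proposal does not prove the statement it was asked to prove. The statement is Lemma~\ref{lem 2.6.}, the Strichartz estimates \eqref{GrindEQ__2_1_} and \eqref{GrindEQ__2_2_} for the free propagator $S(t)=e^{it\Delta}$, which the paper itself does not prove but simply recalls as standard, citing Cazenave \cite{C03} and Wang--Huo--Hao--Guo \cite{WHHG11}. What you have written is instead an outline of the proof of Theorem~\ref{thm 1.4.} (critical local well-posedness and small-data scattering in $H^{s}(\R^{3})$), and --- decisively --- your argument lists ``the Strichartz estimates of Lemma~\ref{lem 2.6.}'' as one of its two inputs. An argument that invokes the target statement as a black box is circular as a proof of that statement; nothing in your three paragraphs takes even a first step toward establishing \eqref{GrindEQ__2_1_} or \eqref{GrindEQ__2_2_} themselves. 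The fixed-point scheme, the nonlinear estimate for $|x|^{-b}f(u)$, the role of $\varepsilon$, the Hardy-inequality bookkeeping --- all of this concerns the application of Strichartz estimates, not their proof, so the proposal answers a different question.

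For the record, a proof of Lemma~\ref{lem 2.6.} in the non-endpoint range runs along the following standard lines, none of which appear in your proposal. From the explicit kernel of $e^{it\Delta}$ one has the dispersive estimate $\left\| S(t)\phi \right\| _{L^{p}} \lesssim |t|^{-n\left(\frac{1}{2}-\frac{1}{p}\right)} \left\| \phi \right\| _{L^{p'}}$ for $2\le p\le\infty$, obtained by interpolating the $L^{2}$ isometry with the $L^{1}\to L^{\infty}$ bound. A $TT^{*}$ argument combined with the Hardy--Littlewood--Sobolev inequality in the time variable then yields the homogeneous estimate \eqref{GrindEQ__2_1_} with $s=0$ for non-endpoint admissible pairs; the endpoint $p=\frac{2n}{n-2}$, $\gamma(p)=2$ for $n\ge 3$ (which the paper's admissibility condition \eqref{GrindEQ__1_5_} includes) requires the Keel--Tao endpoint theorem. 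The inhomogeneous estimate \eqref{GrindEQ__2_2_} for a diagonal pair follows by duality from the homogeneous one, and the general off-diagonal retarded estimate follows by interpolation together with the Christ--Kiselev lemma (or directly from Keel--Tao at the endpoints). Finally, since $F^{-1}|\xi|^{s}F$ commutes with $S(t)$ and with the Duhamel integral, the $L^{p}$-based estimates upgrade verbatim to the $\dot{H}^{s}_{p}$-based statements in the lemma. Given that the paper explicitly treats this lemma as quoted background, matching the paper here means either reproducing this argument or, as the authors do, citing it --- but not assuming it while proving a downstream theorem.
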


\section{Nonlinear estimates }

In this section, we establish the nonlinear estimates. The main tool in establishing the nonlinear estimates is the fractional Hardy inequality given in Corollary \ref{cor 2.5.}.

We divide the study in two cases: $s\in \N$ and $s\notin \N$.

When $s\in \N$, we obtain the following nonlinear estimates.
\begin{lem}\label{lem 3.1.}
Let $1<p,\;r<\infty $, $s\in \N$ and $\frac{b}{s} <\sigma $. Assume that $f\in C^{\left\lceil s\right\rceil } $ satisfies following condition:
\begin{equation} \label{GrindEQ__3_1_}
\left|f^{\left(k\right)} (u)\right|\lesssim\left|u\right|^{\sigma +1-k} ,~0\le k\le \left\lceil s\right\rceil ,~\left\lceil s\right\rceil \le \sigma +1.
\end{equation}
Suppose that
\begin{equation} \label{GrindEQ__3_2_}
\frac{1}{p} =\sigma \left(\frac{1}{r} -\frac{s}{n} \right)+\frac{1}{r} +\frac{b}{n} ,~ \frac{1}{r} -\frac{s}{n} >0.
\end{equation}
Then we have
\begin{equation} \label{GrindEQ__3_3_}
\left\| |x|^{-b} f(u)\right\| _{\dot{H}_{p}^{s} } \lesssim\left\| u\right\| _{\dot{H}_{r}^{s} }^{\sigma +1} .
\end{equation}
\end{lem}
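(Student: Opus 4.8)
The plan is to reduce the fractional-order Sobolev bound to an integer-order computation by exploiting $s\in\N$, then apply the Leibniz rule, the chain rule for $f(u)$, and the fractional Hardy inequality from Corollary \ref{cor 2.5.} to absorb the singular weight $|x|^{-b}$. Since $s$ is a positive integer, we have $\dot H^s_p$ norms comparable to sums of $\|D^\alpha(\cdot)\|_{L^p}$ over $|\alpha|=s$ (this is the $v=0$ case of Lemma \ref{lem 2.1.}), so it suffices to estimate $\|D^\alpha(|x|^{-b}f(u))\|_{L^p}$ for each multi-index with $|\alpha|=s$.

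First I would expand $D^\alpha(|x|^{-b}f(u))$ by the Leibniz rule into a sum of terms of the form $(D^\beta |x|^{-b})\,D^\gamma(f(u))$ with $\beta+\gamma=\alpha$, noting $D^\beta|x|^{-b}\sim|x|^{-b-|\beta|}$. Next, for the factor $D^\gamma(f(u))$ with $|\gamma|=j$, the chain rule (Fa\`a di Bruno) produces a sum of products $f^{(k)}(u)\,\partial^{\mu_1}u\cdots\partial^{\mu_k}u$ where $k\le j$ and $\sum|\mu_\ell|=j$; using the hypothesis $|f^{(k)}(u)|\lesssim|u|^{\sigma+1-k}$ from \eqref{GrindEQ__3_1_}, each such term is controlled pointwise by $|u|^{\sigma+1-k}\prod_{\ell}|D^{\mu_\ell}u|$. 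Collecting everything, the problem is to bound in $L^p$ a generic term of the shape
\[
|x|^{-b-|\beta|}\,|u|^{\sigma+1-k}\,|D^{\mu_1}u|\cdots|D^{\mu_k}u|,
\qquad |\beta|+\sum_{\ell}|\mu_\ell|=s.
\]

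Now I would distribute the weight: write $|x|^{-b-|\beta|}=|x|^{-b_0}|x|^{-s_1}\cdots|x|^{-s_k}$ (and a piece to pair with the $|u|^{\sigma+1-k}$ factors), choosing the exponents so that each weighted factor can be estimated by H\"older's inequality in a form ready for Hardy. Concretely, for each derivative factor I would apply Corollary \ref{cor 2.5.} in the guise $\||x|^{-s_\ell}D^{\mu_\ell}u\|_{L^{q_\ell}}\lesssim\|D^{\mu_\ell}u\|_{\dot H^{s_\ell}_{q_\ell}}$, choosing $s_\ell$ so that $|\mu_\ell|+s_\ell=s$; then Lemma \ref{lem 2.1.} identifies $\|D^{\mu_\ell}u\|_{\dot H^{s_\ell}_{q_\ell}}$ with a piece of $\|u\|_{\dot H^s_{q_\ell}}$, and the Sobolev embedding Lemma \ref{lem 2.3.} converts each $\dot H^s_{q_\ell}$ into the target $\dot H^s_r$. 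The remaining $|u|^{\sigma+1-k}$ factors are handled by plain H\"older together with a Hardy bound on the leftover weight (here the hypothesis $b/s<\sigma$, i.e. $b<s\sigma$, is what guarantees there is enough room to distribute the weight among the $\sigma+1-k$ low-order copies of $u$ without violating the admissibility $0<s_\ell<n/q_\ell$ required by Hardy). The scaling relation \eqref{GrindEQ__3_2_} is precisely the exponent identity that makes all the H\"older and embedding exponents close up to give $\||x|^{-b}f(u)\|_{\dot H^s_p}\lesssim\|u\|_{\dot H^s_r}^{\sigma+1}$.

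The main obstacle I anticipate is the bookkeeping in the weight-splitting step: for each term in the Leibniz/Fa\`a di Bruno expansion the exponents $s_\ell$ and the intermediate Lebesgue indices $q_\ell$ depend on the particular distribution $(\beta,\mu_1,\dots,\mu_k)$, and one must verify simultaneously that every invoked Hardy inequality satisfies $0<s_\ell<n/q_\ell$, that every H\"older sum of reciprocals equals $1/p$, and that every Sobolev embedding is dimensionally consistent with $1/r-s/n>0$. Checking that the single scaling constraint \eqref{GrindEQ__3_2_} together with $b<s\sigma$ uniformly covers all these simultaneous requirements across all terms is the crux; once the exponent bookkeeping is in place, each individual estimate is a direct application of the cited lemmas.
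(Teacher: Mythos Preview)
Your plan is essentially the paper's proof: reduce via Lemma~\ref{lem 2.1.} to $\|D^\alpha(|x|^{-b}f(u))\|_{L^p}$, Leibniz-expand, apply Fa\`a di Bruno, then close each term with H\"older, Corollary~\ref{cor 2.5.}, and the embedding Lemma~\ref{lem 2.3.}. The bookkeeping you flag as the crux is organized in the paper by a case split on the sign of $c:=b+s-qs$ (the leftover weight after assigning $|x|^{-(s-|\alpha_i|)}$ to each derivative factor): when $c\ge 0$ your distribution works verbatim, while when $c<0$ one instead loads the full weight onto only the first $j$ derivative factors (choosing the minimal $j$ with $b+|\alpha'|\le\sum_{i\le j}(s-|\alpha_i|)$) and handles the remaining factors by pure Sobolev embedding without any Hardy weight.
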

\begin{proof}
By Lemma \ref{lem 2.1.}, we have
\begin{equation}\nonumber
\left\| |x|^{-b} f(u)\right\| _{\dot{H}_{p}^{s} } =\sum _{\left|\alpha \right|=s}\left\| D^{\alpha } \left(|x|^{-b} f(u)\right)\right\| _{p}  =\sum _{\left|\alpha '\right|+\left|\alpha ''\right|=s}\left\| D^{\alpha '} \left(|x|^{-b} \right)D^{\alpha ''} f(u)\right\| _{p}  .
\end{equation}
So it suffices to show that
\begin{equation} \label{GrindEQ__3_4_}
\left\| D^{\alpha '} \left(|x|^{-b} \right)D^{\alpha ''} f(u)\right\| _{p} \lesssim\left\| u\right\| _{\dot{H}_{r}^{s} }^{\sigma +1},
\end{equation}
for any $\alpha ',\;\alpha ''$ satisfying $\left|\alpha '\right|+\left|\alpha ''\right|=s$. We divide the study in two cases: $\left|\alpha ''\right|=0$ and $\left|\alpha ''\right|\ne 0$.

\textbf{Case 1.} We consider the case $\left|\alpha ''\right|=0$, i.e. $\left|\alpha '\right|=s$. In view of \eqref{GrindEQ__3_2_}, we have $\frac{1}{p} =\frac{\sigma+1}{\rho _{1} }$, where
\begin{equation} \label{GrindEQ__3_5__}
\frac{1}{\rho _{1} }=\frac{1}{r} -\frac{1}{n} \left(s-\frac{b+s}{\sigma +1} \right).
\end{equation}
We can see that $s-\frac{b+s}{\sigma +1} >0$ if, and only if, $\sigma >\frac{b}{s} $. Thus using \eqref{GrindEQ__3_5__} and Lemma \ref{lem 2.3.}, we have the embedding $\dot{H}_{r}^{s} \subset \dot{H}_{\rho _{1} }^{\frac{b+s}{\sigma +1} } $. Since $\frac{n}{r} >s>\frac{b+s}{\sigma +1}$, we also have ${\frac{b+s}{\sigma +1}}<\frac{n}{\rho_{1}}$. Hence it follows from Corollary \ref{cor 2.5.} that

\begin{eqnarray}\begin{split}\nonumber
\left\| D^{\alpha '} \left(|x|^{-b} \right)f(u)\right\| _{p} &\lesssim\left\| |x|^{-b-s} \left|u\right|^{\sigma +1} \right\| _{p} =\left\| |x|^{-\frac{b+s}{\sigma +1} } u\right\| _{\rho _{1} }^{\sigma +1}  \\
&\lesssim\left\| u\right\| _{\dot{H}_{\rho _{1} }^{\frac{b+s}{\sigma +1}}}^{\sigma +1} \lesssim\left\| u\right\| _{\dot{H}_{r}^{s} }^{\sigma +1} .
\end{split}\end{eqnarray}

\textbf{Case 2.} We consider the case $\alpha '\ne \alpha $, i.e. we have to show that
\[\left\| D^{\alpha '} \left(|x|^{-b} \right)D^{\alpha ''} \left(f(u)\right)\right\| _{p} \lesssim\left\| u\right\| _{\dot{H}_{r}^{s} }^{\sigma +1} ,\]
where $\left|\alpha '\right|+\left|\alpha ''\right|=s$ and $\left|\alpha ''\right|\ge 1$. Without loss of generality and for simplicity, we assume that $f$ is a function of a real variable. It follows from the Leibniz rule of derivatives that
\begin{equation} \label{GrindEQ__3_6_}
D^{\alpha ''} (f(u))=\sum _{q=1}^{\left|\alpha ''\right|}\sum _{\Lambda _{\alpha ''}^{q} }C_{\alpha '',\;q} f^{\left(q\right)} (u)\prod _{i=1}^{q}D^{\alpha _{i} } u,
\end{equation}
where $\Lambda _{\alpha ''}^{q} =\left(\alpha _{1} +\cdots +\alpha _{q} =\alpha '',\;\left|\alpha _{i} \right|\ge 1\right)$. Thus we have
\begin{eqnarray}\begin{split}\nonumber
\left\| D^{\alpha '} \left(|x|^{-b} \right)D^{\alpha ''} \left(f(u)\right)\right\| _{p} &\lesssim\left\| |x|^{-b-\left|\alpha '\right|} \sum _{q=1}^{\left|\alpha ''\right|}\left|u\right|^{\sigma +1-q} \sum _{\Lambda _{\alpha ''}^{q} }\prod _{i=1}^{q}D^{\alpha _{i} }u\right\| _{p} \\
&\lesssim\sum _{q=1}^{\left|\alpha ''\right|}\sum _{\Lambda _{\alpha ''}^{q} }\left\| |x|^{-b-\left|\alpha '\right|} \left|u\right|^{\sigma +1-q} \prod _{i=1}^{q}D^{\alpha _{i} } u \right\| _{p}.
\end{split}\end{eqnarray}
So it suffices to show that
\begin{equation} \nonumber
I\equiv \left\| |x|^{-b-\left|\alpha '\right|} \left|u\right|^{\sigma +1-q} \prod _{i=1}^{q}D^{\alpha _{i} } u \right\| _{p} \lesssim\left\| u\right\| _{\dot{H}_{r}^{s} }^{\sigma +1} ,
\end{equation}
where $\alpha _{1} +\cdots +\alpha _{q} =\alpha ''$, $\left|\alpha _{i} \right|\ge 1$, $\left|\alpha '\right|+\left|\alpha ''\right|=s$ and $\left|\alpha ''\right|\ge q\ge 1$.

\emph{Case 2.1.} First, we consider the case $\left|\alpha ''\right|=s$ and $q=1$, i.e. we have to show that
$$I=\left\| |x|^{-b} \left|u\right|^{\sigma } D^{\alpha ''} u\right\| _{p} \lesssim\left\| u\right\| _{\dot{H}_{r}^{s} }^{\sigma +1} ,
$$
where $\left|\alpha ''\right|=s$. In fact, it follows from \eqref{GrindEQ__3_2_} that
\begin{equation} \nonumber
\sigma \left(\frac{1}{r} -\frac{1}{n} \left(s-\frac{b}{\sigma } \right)\right)+\frac{1}{r} =\frac{1}{p} .
\end{equation}
Putting $\frac{1}{\rho _{2} } :=\frac{1}{r} -\frac{1}{n} \left(s-\frac{b}{\sigma } \right)$, we have the embedding $\dot{H}_{r}^{s} \subset \dot{H}_{\rho _{2} }^{b/\sigma } $. Hence, noticing $\frac{n}{r} >s>\frac{b}{\sigma } $, it follows from H\"{o}lder inequality and Corollary \ref{cor 2.5.} that
\begin{equation} \nonumber
I=\left\| |x|^{-b} \left|u\right|^{\sigma } D^{\alpha } u\right\| _{p} \le \left\| |x|^{-\frac{b}{\sigma } } u\right\| _{\rho _{2} }^{\sigma } \left\| D^{\alpha } u\right\| _{r} \lesssim\left\| u\right\| _{\dot{H}_{\rho _{2} }^{b/\sigma } }^{\sigma } \left\| u\right\| _{\dot{H}_{r}^{s} } \lesssim\left\| u\right\| _{\dot{H}_{r}^{s} }^{\sigma +1} .
\end{equation}

\emph{Case 2.2.} Next, we consider the case $s>\left|\alpha ''\right|\ge 1$ or $q\ge 2$. In this case, we can see that $s-\left|\alpha _{i} \right|\ge 1$, for $1\le i\le q$. Since $\sigma >\frac{b}{s} $, we have
\begin{equation} \label{GrindEQ__3_7_}
b-sq+s<s\left(\sigma +1-q\right).
\end{equation}
Putting $c:=b-sq+s$, we can see that
\begin{equation} \label{GrindEQ__3_8_}
b+\left|\alpha '\right|=c+\sum _{i=1}^{q}\left(s-\left|\alpha _{i} \right|\right) .
\end{equation}
We also divide the study in two cases: $c\ge0$ and $c<0$.

$\cdot$ If $c\ge 0$, it follows from \eqref{GrindEQ__3_7_} that $\sigma +1-q>0$.
       Using \eqref{GrindEQ__3_2_} and \eqref{GrindEQ__3_8_}, we have
\begin{equation} \nonumber
\left(\sigma +1-q\right)\left(\frac{1}{r} -\frac{1}{n} \left(s-\frac{c}{\sigma +1-q} \right)\right)+\frac{q}{r} =\frac{1}{p} .
\end{equation}
Putting $\frac{1}{\rho_{3} } :=\frac{1}{r} -\frac{1}{n} \left(s-\frac{c}{\sigma +1-q} \right)$,
and noticing $\frac{n}{r} >s>\frac{c}{\sigma +1-q} $, it follows from H\"{o}lder inequality and Corollary \ref{cor 2.5.} that
\begin{eqnarray}\begin{split} \nonumber
I&=\left\| |x|^{-b-\left|\alpha '\right|} \left|u\right|^{\sigma +1-q} \prod _{i=1}^{q}D^{\alpha _{i} } u \right\| _{p}\\
&\lesssim\left\| |x|^{-c} \left|u\right|^{\sigma +1-q} \prod _{i=1}^{q}|x|^{-\left(s-\left|\alpha _{i} \right|\right)} D^{\alpha _{i} } u \right\| _{p}  \\
&\lesssim\left\| \left(|x|^{-\frac{c}{\sigma +1-q} } \left|u\right|\right)^{\sigma +1-q} \prod _{i=1}^{q}|x|^{-\left(s-\left|\alpha _{i} \right|\right)} D^{\alpha _{i} } u \right\| _{p} \\
&\lesssim\left\| |x|^{-\frac{c}{\sigma +1-q} } u\right\| _{\rho _{3} }^{\sigma +1-q} \prod _{i=1}^{q}\left\| |x|^{-\left(s-\left|\alpha _{i} \right|\right)} D^{\alpha _{i} } u\right\| _{r}   \\
&\lesssim\left\| u\right\| _{\dot{H}_{\rho _{3} }^{\frac{c}{\sigma +1-q} } }^{\sigma +1-q} \prod _{i=1}^{q}\left\| D^{\alpha _{i} } u\right\| _{\dot{H}_{r}^{s-\left|\alpha _{i} \right|} }  \lesssim\left\| u\right\| _{\dot{H}_{r}^{s} }^{\sigma +1} ,
\end{split}\end{eqnarray}
where the last inequality follows from $\dot{H}_{r}^{s} \subset \dot{H}_{\rho _{3} }^{\frac{c}{\sigma +1-q} } $.

$\cdot$ If $c<0$, then we have
\begin{equation} \nonumber
c=b+\left|\alpha '\right|-\sum _{i=1}^{q}\left(s-\left|\alpha _{i} \right|\right) <0.
\end{equation}
Thus we can take a minimal integer $j\ge 1$ such that
\begin{equation} \nonumber
b+\left|\alpha '\right|-\sum _{i=1}^{j}\left(s-\left|\alpha _{i} \right|\right) \le 0,
\end{equation}
indeed, we have
\begin{equation} \nonumber
\sum _{i=1}^{j-1}\left(s-\left|\alpha _{i} \right|\right) <b+\left|\alpha '\right|\le \sum _{i=1}^{j}\left(s-\left|\alpha _{i} \right|\right),
\end{equation}
where we assume that $\sum _{i=1}^{0}a_{i}=0$. Thus there exists $s_{0}$ such that
\begin{equation} \label{GrindEQ__3_9_}
b+\left|\alpha '\right|=\sum _{i=1}^{j-1}\left(s-\left|\alpha _{i} \right|\right) +\left(s_{0} -\left|\alpha _{j} \right|\right)
\end{equation}
and $s\ge s_{0} >\left|\alpha _{j} \right|$. Hence we have
\begin{equation} \label{GrindEQ__3_10_}
I=\left\| \left|u\right|^{\sigma +1-q} |x|^{-\left(s_{0} -\left|\alpha _{j} \right|\right)} D^{\alpha _{j} } u\prod _{i=1}^{j-1}|x|^{-\left(s-\left|\alpha _{i} \right|\right)} D^{\alpha _{i} } u \prod _{k=j+1}^{q}D^{\alpha _{k} } u \right\| _{p},
\end{equation}
where we assume that $\prod _{i=1}^{0}a_{i}=1$. It also follows from \eqref{GrindEQ__3_2_} and \eqref{GrindEQ__3_9_} that
\begin{equation} \label{GrindEQ__3_11_}
\frac{1}{p} =\left(\sigma +1-q\right)\left(\frac{1}{r} -\frac{s}{n} \right)+\frac{j-1}{r} +\sum _{i=j+1}^{q}\left(\frac{1}{r} -\frac{s-\left|\alpha _{i} \right|}{n} \right) +\frac{1}{r} -\frac{s-s_{0} }{n} .
\end{equation}
Putting
\begin{equation} \nonumber
\frac{1}{a_{0} } :=\frac{1}{r} -\frac{s}{n},~ \frac{1}{a_{i} } :=\frac{1}{r} -\frac{s-\left|\alpha _{i} \right|}{n} , ~\frac{1}{a} :=\frac{1}{r} -\frac{s-s_{0} }{n} ,
\end{equation}
and using \eqref{GrindEQ__3_10_}, \eqref{GrindEQ__3_11_}, H\"{o}lder inequality and Corollary \ref{cor 2.5.}, we have
\begin{eqnarray}\begin{split} \nonumber
I&\lesssim\left\| u\right\| _{a_{0} }^{\sigma +1-q} \left\| |x|^{-\left(s_{0} -\left|\alpha _{j} \right|\right)} D^{\alpha _{j} } u\right\| _{a} \prod _{i=1}^{j-1}\left\| |x|^{-\left(s-\left|\alpha _{i} \right|\right)} D^{\alpha _{i} } u\right\| _{r}  \prod _{i=j+1}^{q}\left\| D^{\alpha _{i} } u\right\| _{a_{i} } \\
&\lesssim\left\| u\right\| _{\dot{H}_{r}^{s} }^{\sigma +1-q} \left\| D^{\alpha _{j} } u\right\| _{\dot{H}_{a}^{s_{0} -\left|\alpha _{j} \right|} } \prod _{i=1}^{j-1}\left\| D^{\alpha _{i} } u\right\| _{\dot{H}_{r}^{s-\left|\alpha _{i} \right|} }  \prod _{i=j+1}^{q}\left\| u\right\| _{\dot{H}_{a_{i} }^{\left|\alpha _{i} \right|} } \\
&\lesssim\left\| u\right\| _{\dot{H}_{r}^{s} }^{\sigma } \left\| u\right\| _{\dot{H}_{a}^{s_{0} } } \lesssim\left\| u\right\| _{\dot{H}_{r}^{s} }^{\sigma +1} ,
\end{split}\end{eqnarray}
where we use the embeddings: $\dot{H}_{r}^{s} \subset L^{a_{0} } $, $\dot{H}_{r}^{s} \subset \dot{H}_{a}^{s_{0} } $ and $\dot{H}_{r}^{s} \subset \dot{H}_{a_{i} }^{\left|\alpha _{i} \right|} $.
\end{proof}

If $s\notin \N$, we have the following similar nonlinear estimates.

\begin{lem}\label{lem 3.2.}
Let $1<p,\;r<\infty $, $s>1$, $s\notin \N$ and $\frac{b+1}{s} <\sigma $. Assume further that $\frac{1}{p}+\frac{\left\lceil s\right\rceil-s}{n}<1$.
If \eqref{GrindEQ__3_1_} and \eqref{GrindEQ__3_2_} are satisfied, then \eqref{GrindEQ__3_3_} still holds.
\end{lem}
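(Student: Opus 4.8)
The plan is to run the proof of Lemma \ref{lem 3.1.} with one extra fractional derivative inserted, handled by the fractional product rule. Writing $v=s-[s]\in(0,1)$, I would first invoke Lemma \ref{lem 2.1.} to trade the fractional norm for integer derivatives, $\||x|^{-b}f(u)\|_{\dot H_p^s}\sim\sum_{|\alpha|=[s]}\|D^\alpha(|x|^{-b}f(u))\|_{\dot H_p^v}$, then expand $D^\alpha$ by the Leibniz rule and $D^{\alpha''}(f(u))$ as in \eqref{GrindEQ__3_6_}. This reduces matters to the model terms $|x|^{-b-|\alpha'|}f^{(q)}(u)\prod_{i=1}^{q}D^{\alpha_i}u$ with $|\alpha'|+\sum_i|\alpha_i|=[s]$ and $|\alpha_i|\ge 1$ — precisely the summands treated in Lemma \ref{lem 3.1.}, now measured in $\dot H_p^v$ rather than $L^p$ — so I would keep the same case distinction (whether $|\alpha''|=0$, whether $|\alpha''|=[s]$ with $q=1$, and the sign of the residual exponent $c$ from \eqref{GrindEQ__3_8_}).

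For each model term I would estimate its $\dot H_p^v$ norm by the fractional product rule (Lemma \ref{lem 2.2.}), placing the fractional derivative of order $v$ on the single factor $D^{\alpha_i}u$ carrying the largest $|\alpha_i|$, which turns that factor into $\|D^{\alpha_i}u\|_{\dot H^v}\sim\|u\|_{\dot H^{|\alpha_i|+v}}$; the factor $f^{(q)}(u)$ is then controlled by a fractional chain rule (legitimate since $f\in C^{\lceil s\rceil}$ with $\lceil s\rceil\le\sigma+1$, cf.\ \eqref{GrindEQ__3_1_}), and the remaining factors are taken in plain Lebesgue norms. Exactly as in Lemma \ref{lem 3.1.}, the singular weight $|x|^{-b-|\alpha'|}$ is distributed among the $u$-factors according to the scaling identity \eqref{GrindEQ__3_2_} and absorbed by the fractional Hardy inequality (Corollary \ref{cor 2.5.}), after which Lemma \ref{lem 2.3.} embeds every resulting homogeneous Sobolev norm into $\dot H_r^s$, giving \eqref{GrindEQ__3_3_}.

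The crux — and the reason the hypotheses must tighten relative to Lemma \ref{lem 3.1.} — is the borderline term in which all $[s]$ derivatives pile on a single $u$, i.e.\ $q=1$, $|\alpha'|=0$, $|\alpha_1|=[s]$. There the factor $D^{\alpha_1}u$ must absorb the \emph{entire} fractional derivative $v$, landing at exactly $\dot H^{[s]+v}=\dot H^s$ with no regularity to spare, so it must be assigned weight zero while the whole $|x|^{-b}$ is carried by the $|u|^{\sigma}$ factor; this is precisely why the nonlocal $v$-derivative cannot be dispatched by the pointwise bound $|f^{(q)}(u)|\lesssim|u|^{\sigma+1-q}$ that sufficed in Lemma \ref{lem 3.1.}. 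Insisting that this top factor land in $\dot H_r^s$ (and not in a larger space) while the Hardy weight on $|u|^{\sigma}$ stays strictly between $0$ and its Hardy threshold forces the stronger lower bound $\sigma>\frac{b+1}{s}$ — one extra $\frac1s$ over the integer case, paying for the fractional derivative — whereas admissibility of the auxiliary exponent produced by the $v$-derivative, equivalently $\tilde p>1$ in the embedding $\dot H^1_{\tilde p}\subset\dot H_p^v$ with $\frac{1}{\tilde p}=\frac1p+\frac{\lceil s\rceil-s}{n}$, is exactly the assumption $\frac1p+\frac{\lceil s\rceil-s}{n}<1$. I expect the main labor to be the bookkeeping: checking that all these exponent constraints hold simultaneously in every case and for both signs of $c$.
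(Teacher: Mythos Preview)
Your overall decomposition (Lemma \ref{lem 2.1.}, Leibniz, Fa\`a di Bruno, Hardy, Sobolev embeddings) matches the paper's, but the plan to ``place the fractional derivative of order $v$ on the single factor $D^{\alpha_i}u$'' via Lemma \ref{lem 2.2.} accounts for only one of the two terms that rule produces. The companion term carries a $\dot H^{v}$-norm on the complementary factor, which still contains the singular weight $D^{\alpha'}(|x|^{-b})$; neither a pointwise bound $|D^{\alpha'}(|x|^{-b})|\lesssim |x|^{-b-|\alpha'|}$, nor a fractional chain rule for $f^{(q)}(u)$, nor Hardy can be applied inside that nonlocal norm, because $|x|^{-b-|\alpha'|}$ lies in no Lebesgue or homogeneous Sobolev space on its own. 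This is not a bookkeeping issue: for instance in the case $|\alpha''|=0$ there is no $D^{\alpha_i}u$ factor whatsoever, so your scheme has nowhere to send the $v$-derivative.

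The paper's remedy is precisely the embedding you mention only in passing: bound $\|\,\cdot\,\|_{\dot H^{v}_{p}}\lesssim\|\,\cdot\,\|_{\dot H^{1}_{p_1}}$ with $\frac{1}{p_1}=\frac{1}{p}+\frac{\lceil s\rceil-s}{n}$ (this is exactly where the hypothesis $\frac{1}{p}+\frac{\lceil s\rceil-s}{n}<1$ enters), take one more \emph{integer} derivative, and only then invoke pointwise bounds on the weight and on $f^{(q)}(u)$ in $L^{p_1}$. That extra derivative hits $|x|^{-b}$ and produces $|x|^{-b-1}$; applying Hardy with weight $\frac{b+1}{\sigma}$ per copy of $u$ is what forces $\sigma>\frac{b+1}{s}$. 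So both strengthened hypotheses originate from the second product-rule term and the embedding step, not from the term you single out (that one only needs $\sigma>\frac{b}{s}$, exactly as in Lemma \ref{lem 3.1.}). If you route the ``bad'' product-rule term through $\dot H^{1}_{p_1}$ before estimating, your outline becomes essentially the paper's proof.
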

\begin{proof}
By Lemma \ref{lem 2.1.}, we have
\begin{eqnarray}\begin{split} \nonumber
\left\| |x|^{-b} f(u)\right\| _{\dot{H}_{p}^{s} } &=\sum _{\left|\alpha \right|=\left[s\right]}\left\| D^{\alpha } \left(|x|^{-b} f(u)\right)\right\| _{\dot{H}_{p}^{v} } \\
&=\sum _{\left|\alpha '\right|+\left|\alpha ''\right|=\left[s\right]}\left\| D^{\alpha '} \left(|x|^{-b} \right)D^{\alpha ''} f(u)\right\| _{\dot{H}_{p}^{v} }  ,
\end{split}\end{eqnarray}
where $v=s-\left[s\right]$. So it suffices to show that
\begin{equation} \nonumber
II\equiv \left\| D^{\alpha '} \left(|x|^{-b} \right)D^{\alpha ''} \left(f(u)\right)\right\| _{\dot{H}_{p}^{v} } \lesssim\left\| u\right\| _{\dot{H}_{r}^{s} }^{\sigma +1},
\end{equation}
where $\left|\alpha '\right|+\left|\alpha ''\right|=\left[s\right]$. We also divide study in two cases: $\left|\alpha ''\right|=0$ and $\left|\alpha ''\right|\ne 0$.

\textbf{Case 1.} We consider the case $\left|\alpha ''\right|=0$, i.e. $\left|\alpha '\right|=\left[s\right]$. Since $\frac{1}{p}+\frac{\left\lceil s\right\rceil-s}{n}<1$, putting
\begin{equation} \nonumber
\frac{1}{p_{1} } :=\frac{1}{p} -\frac{v}{n} +\frac{1}{n} ,
\end{equation}
we have $p_{1} >1$. Furthermore, there holds the embedding $\dot{H}_{p_{1} }^{1} \subset \dot{H}_{p}^{v} $. Thus we have
\begin{eqnarray}\begin{split} \nonumber
II&=\left\| D^{\alpha '} \left(|x|^{-b} \right)f(u)\right\| _{\dot{H}_{p}^{v} } \lesssim\left\| D^{\alpha '} \left(|x|^{-b} \right)f(u)\right\| _{\dot{H}_{p_{1} }^{1} } \\
&=\sum _{i=1}^{n}\left\| \partial _{x_{i} } \left(D^{\alpha '} \left(|x|^{-b} \right)f(u)\right)\right\| _{p_{1} }.
\end{split}\end{eqnarray}
So it suffices to show that
\begin{equation} \nonumber
\left\| \partial _{x_{i} } \left(D^{\alpha '} \left(|x|^{-b} \right)f(u)\right)\right\| _{p_{1} } \lesssim\left\| u\right\| _{\dot{H}_{r}^{s} }^{\sigma +1},
\end{equation}
for any $1\le i\le n$. It also follows from \eqref{GrindEQ__3_2_} that
\begin{equation} \label{GrindEQ__3_12_}
\frac{1}{p_{1} } =\sigma \left(\frac{1}{r} -\frac{s}{n} \right)+\frac{1}{r} +\frac{b}{n} +\frac{1}{n} -\frac{v}{n} .
\end{equation}
Using \eqref{GrindEQ__3_1_} and \eqref{GrindEQ__3_12_}, we have
\begin{eqnarray}\begin{split} \label{GrindEQ__3_13_}
&\left\| \partial _{x_{i} } \left(D^{\alpha '} \left(|x|^{-b} \right)f(u)\right)\right\| _{p_{1} }\\
&~~~~~~~~\le \left\| \partial _{x_{i} } \left(D^{\alpha '} \left(|x|^{-b} \right)\right)f(u)\right\| _{p_{1} } +\left\| D^{\alpha '} \left(|x|^{-b} \right)\partial _{x_{i} } \left(f(u)\right)\right\| _{p_{1} } \\
&~~~~~~~~\lesssim\left\| |x|^{-b-\left[s\right]-1} \left|u\right|^{\sigma } u\right\| _{p_{1} } +\left\| |x|^{-b-\left[s\right]} \left|u\right|^{\sigma } \partial _{x_{i} } u\right\| _{p_{1} } \equiv II_{1} +II_{2}
\end{split}\end{eqnarray}

First, we estimate $II_{1} $. In view of \eqref{GrindEQ__3_12_}, we have
\begin{equation} \nonumber
\frac{1}{p_{1} } =\sigma \left(\frac{1}{r} -\frac{1}{n} \left(s-\frac{b+1}{\sigma } \right)\right)+\frac{1}{r} -\frac{v}{n} ,
\end{equation}
Putting
\begin{equation} \nonumber
\frac{1}{a_{1} } :=\frac{1}{r} -\frac{1}{n} \left(s-\frac{b+1}{\sigma } \right), ~\frac{1}{b_{1} } :=\frac{1}{r} -\frac{v}{n} ,
\end{equation}
and using the same argument as in the proof of Lemma \ref{lem 3.1.}, we have
\begin{eqnarray}\begin{split} \nonumber
II_{1}&=\left\| \left(|x|^{-\frac{b+1}{\sigma } } \left|u\right|\right)^{\sigma } |x|^{-\left[s\right]} u\right\| _{p_{1} } \le \left\| |x|^{-\frac{b+1}{\sigma } } u\right\| _{a_{1} }^{\sigma } \left\| |x|^{-\left[s\right]} u\right\| _{b_{1} } \\
&\lesssim\left\| u\right\| _{\dot{H}_{a_{1} }^{\frac{b+1}{\sigma } } }^{\sigma } \left\| u\right\| _{\dot{H}_{b_{1} }^{[s]} } \lesssim\left\| u\right\| _{\dot{H}_{r}^{s} }^{\sigma +1}.
\end{split}\end{eqnarray}

Next, we estimate $II_{2} $. Using the same argument as in the estimate of $II_{1} $, we also have
\begin{eqnarray}\begin{split} \nonumber
II_{2} &=\left\| |x|^{-b-1} \left|u\right|^{\sigma } |x|^{-\left(\left[s\right]-1\right)} \partial _{x_{i} } u\right\| _{p_{1} } \le \left\| |x|^{-\frac{b+1}{\sigma } } u\right\| _{a_{1} }^{\sigma } \left\| |x|^{-\left(\left[s\right]-1\right)} \partial _{x_{i} } u\right\| _{b_{1} } \\
&\lesssim\left\| u\right\| _{\dot{H}_{r}^{s} }^{\sigma } \left\| \partial _{x_{i} } u\right\| _{\dot{H}_{b_{1} }^{[s]-1} } \lesssim\left\| u\right\| _{\dot{H}_{r}^{s} }^{\sigma +1}.
\end{split}\end{eqnarray}

\textbf{Case 2.} We consider the case $\left|\alpha ''\right|\ne 0$, i.e. we have to show that
\begin{equation} \nonumber
II=\left\| D^{\alpha '} \left(|x|^{-b} \right)D^{\alpha ''} \left(f(u)\right)\right\| _{\dot{H}_{p}^{v} } \lesssim\left\| u\right\| _{\dot{H}_{r}^{s} }^{\sigma +1} ,
\end{equation}
where $\left|\alpha '\right|+\left|\alpha ''\right|=\left[s\right]$ and $\left|\alpha ''\right|\ge 1$. It follows from \eqref{GrindEQ__3_6_} that
\begin{equation} \nonumber
II=\left\| D^{\alpha '} \left(|x|^{-b} \right)D^{\alpha ''} \left(f(u)\right)\right\| _{\dot{H}_{p}^{v} } \lesssim\sum _{q=1}^{\left|\alpha ''\right|}\sum _{\Lambda _{\alpha ''}^{q} }\left\| D^{\alpha '} \left(|x|^{-b} \right)f^{(q)} (u)\prod _{i=1}^{q}D^{\alpha _{i} } u \right\| _{\dot{H}_{p}^{v} },
\end{equation}
where $\Lambda _{\alpha ''}^{q} =\left(\alpha _{1} +\cdots +\alpha _{q} =\alpha '',\;\left|\alpha _{i} \right|\ge 1\right)$. Hence it suffices to show that
\begin{equation} \label{GrindEQ__3_14_}
II_{3} \equiv \left\| D^{\alpha '} \left(|x|^{-b} \right)f^{(q)} (u)\prod _{i=1}^{q}D^{\alpha _{i} } u \right\| _{\dot{H}_{p}^{v} } \lesssim\left\| u\right\| _{\dot{H}_{r}^{s} }^{\sigma +1} ,
\end{equation}
where $\alpha _{1} +\cdots +\alpha _{q} =\alpha ''$, $\left|\alpha _{i} \right|\ge 1$, $\left|\alpha '\right|+\left|\alpha ''\right|=\left[s\right]$ and $\left|\alpha ''\right|\ge q\ge 1$.

\noindent We divide the proof of \eqref{GrindEQ__3_14_} in two cases.

\emph{Case 2.1.} We consider the case $\left|\alpha ''\right|=\left[s\right]$ and $q=1$, i.e. we have to prove that
\begin{equation} \nonumber
II_{3} \equiv \left\| |x|^{-b} f'(u)D^{\alpha ''} u\right\| _{\dot{H}_{p}^{v} } \lesssim\left\| u\right\| _{\dot{H}_{r}^{s} }^{\sigma +1} ,
\end{equation}
where $\left|\alpha ''\right|=\left[s\right]$. Putting
\begin{equation} \label{GrindEQ__3_15_}
\frac{1}{p_{2} }:=\sigma \left(\frac{1}{r} -\frac{s}{n} \right)+\frac{b}{n} ,~ \frac{1}{p_{3} } :=\frac{1}{p_{2} } +\frac{v}{n} ,~ \frac{1}{r_{3} }:=\frac{1}{r} -\frac{v}{n} ,
\end{equation}
it follows from \eqref{GrindEQ__3_2_} and Lemma \ref{lem 2.2.} that
\begin{equation} \label{GrindEQ__3_46_}
II_{3} =\left\| |x|^{-b} f'(u)\right\| _{p_{2} } \left\| D^{\alpha ''} u\right\| _{\dot{H}_{r}^{v} } +\left\| |x|^{-b} f'(u)\right\| _{\dot{H}_{p_{3} }^{v} } \left\| D^{\alpha ''} u\right\| _{r_{3} } \equiv II_{4} +II_{5} .
\end{equation}

First, we estimate $II_{4} $. Putting $\frac{1}{a_{2} } :=\frac{1}{r} -\frac{1}{n} \left(s-\frac{b}{\sigma } \right)$, it follows from \eqref{GrindEQ__3_15_} and Corollary \ref{cor 2.5.} that
\begin{eqnarray}\begin{split} \nonumber
II_{4}&\lesssim\left\| |x|^{-b} \left|u\right|^{\sigma } \right\| _{p_{2} } \left\| D^{\alpha ''} u\right\| _{\dot{H}_{r}^{v} } \lesssim\left\| |x|^{-\frac{b}{\sigma } } u\right\| _{a_{2} }^{\sigma } \left\| u\right\| _{\dot{H}_{r}^{s} } \\
&\lesssim\left\| u\right\| _{\dot{H}_{a_{2} }^{\frac{b}{\sigma } } }^{\sigma } \left\| u\right\| _{\dot{H}_{r}^{s} } \lesssim\left\| u\right\| _{\dot{H}_{r}^{s} }^{\sigma +1} .
\end{split}\end{eqnarray}

Next, we estimate $II_{5} $. Using \eqref{GrindEQ__3_15_}, we have the embedding $\dot{H}_{r}^{v} \subset L^{r_{3} } $, which implies that
\begin{equation} \label{GrindEQ__3_17_}
\left\| D^{\alpha ''} u\right\| _{r_{3} } \lesssim\left\| D^{\alpha ''} u\right\| _{\dot{H}_{r}^{v} } \lesssim\left\| u\right\| _{\dot{H}_{r}^{s} } .
\end{equation}
Putting $\frac{1}{p_{4} } :=\frac{1}{p_{3} } -\frac{v}{n} +\frac{1}{n} $, we have the embedding $\dot{H}_{p_{4} }^{1} \subset \dot{H}_{p_{3} }^{v} $. Thus we have
\begin{equation} \label{GrindEQ__3_18_}
\left\| |x|^{-b} f'(u)\right\| _{\dot{H}_{p_{3} }^{v} } \lesssim\left\| |x|^{-b} f'(u)\right\| _{\dot{H}_{p_{4} }^{1} } =\sum _{i=1}^{n}\left\| \partial _{x_{i} } \left(|x|^{-b} f'(u)\right)\right\| _{p_{4} }  .
\end{equation}
Noticing that
\begin{equation}\nonumber
\frac{1}{p_{4} } =\sigma \left(\frac{1}{r} -\frac{s}{n} \right)+\frac{b+1}{n} =\frac{\sigma }{a_{1} } , ~\frac{1}{a_{1} } =\frac{1}{r} -\frac{1}{n} \left(s-\frac{b+1}{\sigma } \right),
\end{equation}
we have
\begin{eqnarray}\begin{split} \label{GrindEQ__3_19_}
\left\| \partial _{x_{i} } \left(|x|^{-b} \right)f'(u)\right\| _{p_{4} } &\lesssim\left\| |x|^{-b-1} \left|u\right|^{\sigma } \right\| _{p_{4} } =\left\| |x|^{-\frac{b+1}{\sigma } } u\right\| _{a_{1} }^{\sigma } \\
&\lesssim\left\| u\right\| _{\dot{H}_{a_{1} }^{\frac{b+1}{\sigma } } }^{\sigma } \lesssim\left\| u\right\| _{\dot{H}_{r}^{s} }^{\sigma } .
\end{split}\end{eqnarray}
On the other hand, it follows from Leibniz rule of derivatives that
\begin{equation} \label{GrindEQ__3_20_}
\left\| |x|^{-b} \partial _{x_{i} } \left(f'(u)\right)\right\| _{p_{4} } \lesssim\left\| |x|^{-b} \left|u\right|^{\sigma -1} \partial _{x_{i} } u\right\| _{p_{4} }.
\end{equation}
Putting $c_{1} :=b+1-s$ and noticing $\frac{b+1}{s} <\sigma $, we can see that $c_{1} <s\left(\sigma -1\right)$.

We divide the cases into $c_{1} \ge 0$ and $c_{1} <0$.

$\cdot$ If $c_{1} \ge 0$, we have $\sigma -1>\frac{c_{1} }{s} \ge 0$. Noticing that
\begin{equation} \nonumber
\frac{1}{p_{4} } =\left(\sigma -1\right)\left(\frac{1}{r} -\frac{1}{n} \left(s-\frac{c_{1} }{\sigma -1} \right)\right)+\frac{1}{r},
\end{equation}
and putting $\frac{1}{a_{3} } :=\frac{1}{r} -\frac{1}{n} \left(s-\frac{c_{1} }{\sigma -1} \right)$, we have
\begin{eqnarray}\begin{split} \label{GrindEQ__3_22_}
\left\| |x|^{-b} \left|u\right|^{\sigma -1} \partial _{x_{i} } u\right\| _{p_{4} } &=\left\| |x|^{-\left(b-s+1\right)} \left|u\right|^{\sigma -1} |x|^{-\left(s-1\right)} \partial _{x_{i} } u\right\| _{p_{4} } \\
&\le \left\| |x|^{-\frac{c_{1} }{\sigma -1} } u\right\| _{a_{3} }^{\sigma-1 } \left\| |x|^{-\left(s-1\right)} \partial _{x_{i} } u\right\| _{r} \\
&\lesssim\left\| u\right\| _{\dot{H}_{a_{3} }^{\frac{c_{1} }{\sigma -1} } }^{\sigma-1 } \left\| \partial _{x_{i} } u\right\| _{\dot{H}_{r}^{s-1} } \lesssim\left\| u\right\| _{\dot{H}_{r}^{s} }^{\sigma} .
\end{split}\end{eqnarray}

$\cdot$ Next, we consider the case $c_{1} <0$, i.e. $b+1-s<0$. Noticing
\begin{equation} \nonumber
\frac{1}{p_{4} } =\left(\sigma -1\right)\left(\frac{1}{r} -\frac{s}{n} \right)+\frac{1}{r} -\frac{s-b-1}{n} ,
\end{equation}
and $s-b-1>0$, we have
\begin{eqnarray}\begin{split} \label{GrindEQ__3_22_}
\left\| |x|^{-b} \left|u\right|^{\sigma -1} \partial _{x_{i} } u\right\| _{p_{4} } &=\left\| \left|u\right|^{\sigma -1} |x|^{-b} \partial _{x_{i} } u\right\| _{p_{4} } \;\le \left\| u\right\| _{\dot{H}_{r}^{s} }^{\sigma -1} \left\| |x|^{-b} \partial _{x_{i} } u\right\| _{a_{4} }  \\
&\lesssim\left\| u\right\| _{\dot{H}_{r}^{s} }^{\sigma-1} \left\| \partial _{x_{i} } u\right\| _{\dot{H}_{a_{4} }^{b} } \lesssim\left\| u\right\| _{\dot{H}_{r}^{s} }^{\sigma},
\end{split}\end{eqnarray}
where $\frac{1}{a_{4}}=\frac{1}{r} -\frac{s-b-1}{n}$. In view of \eqref{GrindEQ__3_20_}-\eqref{GrindEQ__3_22_}, we have
\begin{equation} \label{GrindEQ__3_23_}
\left\| |x|^{-b} \partial _{x_{i} } \left(f'(u)\right)\right\| _{p_{4} } \lesssim\left\| u\right\| _{\dot{H}_{r}^{s} }^{\sigma} .
\end{equation}
Using \eqref{GrindEQ__3_18_}, \eqref{GrindEQ__3_19_} and \eqref{GrindEQ__3_23_}, we immediately have
\begin{equation} \label{GrindEQ__3_24_}
\left\| |x|^{-b} f'(u)\right\| _{\dot{H}_{p_{3} }^{v} } \lesssim\left\| u\right\| _{\dot{H}_{r}^{s} }^{\sigma} .
\end{equation}
In view of \eqref{GrindEQ__3_17_} and \eqref{GrindEQ__3_24_}, we have
\begin{equation} \nonumber
II_{5} =\left\| |x|^{-b} f'(u)\right\| _{\dot{H}_{p_{3} }^{v} } \left\| D^{\alpha ''} u\right\| _{r_{3} } \lesssim\left\| u\right\| _{\dot{H}_{r}^{s} }^{\sigma +1} ,
\end{equation}
this completes the proof of \eqref{GrindEQ__3_14_} in Case 2.1.

\emph{Case 2.2.} Now, we prove \eqref{GrindEQ__3_14_} in the case $\left[s\right]>\left|\alpha ''\right|\ge 1$ or $q\ge 2$.

In this case, we can see that $\left[s\right]\ge \left|\alpha _{i} \right|+1$ for $1\le i\le q$. Using the embedding $\dot{H}_{p_{1} }^{1} \subset \dot{H}_{p}^{v} $, we have
\begin{eqnarray}\begin{split} \label{GrindEQ__3_25_}
II_{3}&=\left\| D^{\alpha '} \left(|x|^{-b} \right)f^{(q)} (u)\prod _{i=1}^{q}D^{\alpha _{i} } u \right\| _{\dot{H}_{p_{1} }^{1} } \\
&=\sum _{k=1}^{n}\left\| \partial _{x_{k} } \left(D^{\alpha '} \left(|x|^{-b} \right)f^{(q)} (u)\prod _{i=1}^{q}D^{\alpha _{i} } u \right)\right\| _{p_{1} },
\end{split}\end{eqnarray}
where $\frac{1}{p_{1} }=\frac{1}{p} -\frac{v}{n} +\frac{1}{n}$. We can see that
\begin{equation} \label{GrindEQ__3_26_}
\left\| \partial _{x_{k} } \left(D^{\alpha '} \left(|x|^{-b} \right)f^{(q)} (u)\prod _{i=1}^{q}D^{\alpha _{i} } u \right)\right\| _{p_{1} } \le II_{6} +II_{7} +II_{8},
\end{equation}
where
\begin{equation} \label{GrindEQ__3_27_}
II_{6} =\left\| \partial _{x_{k} } \left(D^{\alpha '} \left(|x|^{-b} \right)\right)f^{(q)} (u)\prod _{i=1}^{q}D^{\alpha _{i} } u \right\| _{p_{1} } ,
\end{equation}
\begin{equation} \label{GrindEQ__3_28_}
II_{7} =\left\| D^{\alpha '} \left(|x|^{-b} \right)\partial _{x_{k} } \left(f^{(q)} (u)\right)\prod _{i=1}^{q}D^{\alpha _{i} } u \right\| _{p_{1} } ,
\end{equation}
\begin{equation} \label{GrindEQ__3_29_}
II_{8} =\left\| D^{\alpha '} \left(|x|^{-b} \right)f^{(q)} (u)\partial _{x_{k} } \left(\prod _{i=1}^{q}D^{\alpha _{i} } u \right)\right\| _{p_{1} } .
\end{equation}

First, we estimate $II_{6} $. It follows from \eqref{GrindEQ__3_2_} that
\begin{equation} \label{GrindEQ__3_30_}
\frac{1}{p_{1} } =\left(\sigma +1-q\right)\left(\frac{1}{r} -\frac{s}{n} \right)+\frac{q}{r} +\frac{b+1+\left[s\right]-qs}{n} .
\end{equation}
Since $b+1<\sigma s$, we have $b+\left[s\right]+1<\sigma s+s$ which is equivalent to
\begin{equation} \label{GrindEQ__3_31_}
b+\left[s\right]+1-qs<\left(\sigma +1-q\right)s
\end{equation}
Putting $c_{2} :=b+\left[s\right]+1-qs$, we have
\begin{equation} \label{GrindEQ__3_32_}
b+\left|\alpha '\right|+1=\sum _{i=1}^{q}\left(s-\left|\alpha _{i} \right|\right) +c_{2} .
\end{equation}
Using \eqref{GrindEQ__3_30_}-\eqref{GrindEQ__3_32_} and the argument similar to that used in Case 2.2 in the proof of Lemma \ref{lem 3.1.}, we can prove
\begin{equation} \label{GrindEQ__3_33_}
II_{6} \lesssim\left\| |x|^{-b-\left|\alpha '\right|-1} \left|u\right|^{\sigma +1-q} \prod _{i=1}^{q}D^{\alpha _{i} } u \right\| _{p_{1} } \lesssim\left\| u\right\| _{\dot{H}_{r}^{s} }^{\sigma +1} ,
\end{equation}
whose proof will be omitted.

Next, we estimate $II_{7} $. Putting $c_{3} :=c_{2} -1$, it follows from \eqref{GrindEQ__3_30_}-\eqref{GrindEQ__3_32_} that
\begin{equation} \label{GrindEQ__3_34_}
\frac{1}{p_{1} } =\left(\sigma -q\right)\left(\frac{1}{r} -\frac{s}{n} \right)+\frac{q}{r} +\frac{c_{3} }{n} +\frac{1}{r} -\frac{s-1}{n} ,
\end{equation}
\begin{equation} \label{GrindEQ__3_35_}
c_{3} <\left(\sigma -q\right)s+s-1,
\end{equation}
\begin{equation} \label{GrindEQ__3_36_}
b+\left|\alpha '\right|=\sum _{i=1}^{q}\left(s-\left|\alpha _{i} \right|\right) +c_{3} .
\end{equation}
Using \eqref{GrindEQ__3_34_}-\eqref{GrindEQ__3_36_} and the argument similar to that used in Case 2.2 in the proof of Lemma \ref{lem 3.1.}, we can also prove
\begin{equation} \label{GrindEQ__3_37_}
II_{7} \lesssim\left\| |x|^{-b-\left|\alpha '\right|} \left|u\right|^{\sigma -q} \partial _{x_{k} } u\prod _{i=1}^{q}D^{\alpha _{i} } u \right\| _{p_{1} } \lesssim\left\| u\right\| _{\dot{H}_{r}^{s} }^{\sigma +1} ,
\end{equation}
whose proof will be omitted.

Finally, we estimate $II_{8} $. We can see that
\begin{equation} \nonumber
II_{8} \lesssim\left\| |x|^{-b-\left|\alpha '\right|} \left|u\right|^{\sigma -q} \partial _{x_{k} } \left(\prod _{i=1}^{q}D^{\alpha _{i} } u \right)\right\| _{p_{1} } \lesssim\sum _{i=1}^{q}\left\| |x|^{-b-\left|\alpha '\right|} \left|u\right|^{\sigma -q} \partial _{x_{k} } \left(D^{\alpha _{i} } u\right)\prod _{j\in I_{i} }D^{\alpha _{j} } u \right\| _{p_{1} }  ,
\end{equation}
where $I_{i} =\left\{j\in \N:\;1\le j\le q,\;i\ne i\right\}$. Using the fact $s\ge \left|\alpha _{i} \right|+1$ and \eqref{GrindEQ__3_34_}-\eqref{GrindEQ__3_36_}, we can also prove
\begin{equation} \label{GrindEQ__3_38_}
\left\| |x|^{-b-\left|\alpha '\right|} \left|u\right|^{\sigma -q} \partial _{x_{k} } \left(D^{\alpha _{i} } u\right)\prod _{j\in I_{i} }D^{\alpha _{j} } u \right\| _{p_{1} } \lesssim\left\| u\right\| _{\dot{H}_{r}^{s} }^{\sigma +1} .
\end{equation}
Using \eqref{GrindEQ__3_25_}, \eqref{GrindEQ__3_26_}, \eqref{GrindEQ__3_33_}, \eqref{GrindEQ__3_37_}, \eqref{GrindEQ__3_38_}, we can get \eqref{GrindEQ__3_14_}. This completes the proof.
\end{proof}
\begin{rem}\label{rem 3.3.}
\textnormal{If $f\left(z\right)$ is a polynomial in $z$ and $\bar{z}$ satisfying $1<\deg \left(f\right)=1+\sigma $, we can see that the assumption $\left\lceil s\right\rceil \le \sigma +1$ in Lemma \ref{lem 3.1.} and \ref{lem 3.2.} can be removed.}
\end{rem}
\begin{lem}\label{lem 3.4.}
Let $1<p,\;r<\infty $, $s\ge 0$ and $\frac{b}{s} <\sigma $. Suppose that \eqref{GrindEQ__3_2_} is satisfied. Then we have
\begin{equation} \nonumber
\left\| |x|^{-b} \left|u\right|^{\sigma } v\right\| _{p} \lesssim\left\| u\right\| _{\dot{H}_{r}^{s} }^{\sigma } \left\| v\right\| _{r} .
\end{equation}
\end{lem}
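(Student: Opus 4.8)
The plan is to reproduce, in a slightly more general setting, exactly the chain of inequalities used in Case 2.1 of the proof of Lemma \ref{lem 3.1.}: that sub-argument is precisely this estimate with $v=D^{\alpha ''}u$ and $s\in\N$, but it never uses either the special form of $v$ or the integrality of $s$, so the same three-step mechanism (Hölder, fractional Hardy, Sobolev embedding) carries over verbatim. First I would absorb the singular weight into the $|u|$ factor by writing
\[
|x|^{-b}\left|u\right|^{\sigma }v=\left(|x|^{-\frac{b}{\sigma }}\left|u\right|\right)^{\sigma }v,
\]
and apply Hölder's inequality with the exponent splitting $\frac{1}{p}=\frac{\sigma }{\rho }+\frac{1}{r}$, where I set $\frac{1}{\rho }:=\frac{1}{r}-\frac{1}{n}\left(s-\frac{b}{\sigma }\right)$. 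A direct computation from \eqref{GrindEQ__3_2_} confirms that this $\rho $ is the correct conjugate, namely $\frac{\sigma }{\rho }=\frac{1}{p}-\frac{1}{r}=\sigma \left(\frac{1}{r}-\frac{s}{n}\right)+\frac{b}{n}$, so that
\[
\left\| |x|^{-b}\left|u\right|^{\sigma }v\right\| _{p}\le \left\| |x|^{-\frac{b}{\sigma }}u\right\| _{\rho }^{\sigma }\left\| v\right\| _{r}.
\]

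Second, I would apply the fractional Hardy inequality of Corollary \ref{cor 2.5.} with weight exponent $b/\sigma $ to the first factor, giving $\left\| |x|^{-b/\sigma }u\right\| _{\rho }\lesssim \left\| u\right\| _{\dot{H}_{\rho }^{b/\sigma }}$, and then invoke the Sobolev embedding $\dot{H}_{r}^{s}\subset \dot{H}_{\rho }^{b/\sigma }$ from Lemma \ref{lem 2.3.} to reach the claimed bound $\left\| u\right\| _{\dot{H}_{r}^{s}}^{\sigma }\left\| v\right\| _{r}$.

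The only genuine point to check — the ``obstacle'' such as it is — is that every index condition required by Corollary \ref{cor 2.5.} and Lemma \ref{lem 2.3.} is met, and this is where the hypothesis $\frac{b}{s}<\sigma $ is used. That hypothesis is exactly what forces $\frac{b}{\sigma }<s$, so the embedding target regularity $s_{2}=b/\sigma $ does not exceed $s_{1}=s$; together with the scale relation $s-\frac{n}{r}=\frac{b}{\sigma }-\frac{n}{\rho }$ (immediate from the definition of $\rho $) and the inequality $\rho >r$ (again a consequence of $b/\sigma <s$), this makes Lemma \ref{lem 2.3.} applicable. For Corollary \ref{cor 2.5.} I must verify $0<\frac{b}{\sigma }<\frac{n}{\rho }$ and $1<\rho <\infty $: the transversality condition $\frac{1}{r}-\frac{s}{n}>0$ in \eqref{GrindEQ__3_2_} yields $\frac{n}{\rho }=\left(\frac{n}{r}-s\right)+\frac{b}{\sigma }>\frac{b}{\sigma }>0$ together with $\frac{1}{\rho }>0$ (so $\rho <\infty $), while $\rho >r>1$ secures $\rho >1$. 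With these verifications in hand the three displayed inequalities chain together and the estimate follows; since no step depended on $s$ being an integer or on $v$ having a special structure, the conclusion holds for all $s\ge 0$ and arbitrary $v\in \dot{H}_{r}^{s}$ with $v\in L^{r}$.
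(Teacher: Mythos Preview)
Your proof is correct and follows exactly the same route as the paper: the auxiliary exponent you call $\rho$ is the paper's $a_{2}$, and both arguments proceed via H\"{o}lder's inequality, Corollary \ref{cor 2.5.}, and the embedding $\dot{H}_{r}^{s}\subset \dot{H}_{a_{2}}^{b/\sigma}$ from Lemma \ref{lem 2.3.}. Your write-up is in fact more careful than the paper's in spelling out why the index conditions for Corollary \ref{cor 2.5.} and Lemma \ref{lem 2.3.} are satisfied.
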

\begin{proof}
Putting $\frac{1}{a_{2} }:=\frac{1}{r} -\frac{1}{n} \left(s-\frac{b}{\sigma } \right)$, we have the embedding $\dot{H}_{r}^{s} \subset \dot{H}_{a_{2} }^{\frac{b}{\sigma }}$. Hence it follows from H\"{o}lder inequality and Corollary \ref{cor 2.5.} that
\begin{equation}\nonumber
\left\| |x|^{-b} \left|u\right|^{\sigma } v\right\| _{p} \lesssim\left\| |x|^{-\frac{b}{\sigma } } u\right\| _{a_{2} }^{\sigma } \left\| v\right\| _{r} \lesssim\left\| u\right\| _{\dot{H}_{a_{2} }^{\frac{b}{\sigma}}}^{\sigma } \left\| v\right\|_{r} \lesssim\left\| u\right\| _{\dot{H}_{r}^{s} }^{\sigma } \left\| v\right\| _{r},
\end{equation}
this completes the proof.
\end{proof}
\section{Proofs of main results}
First, we prove Theorem \ref{thm 1.3.}. Using the nonlinear estimates established in Section 2, we have the following lemma.
\begin{lem}\label{lem 4.1.}
Under the assumption of Theorem \ref{thm 1.3.}, we have
\begin{equation} \label{GrindEQ__4_1_}
\left\| |x|^{-b} f(u)\right\| _{L^{\gamma \left(p\right)^{{'} } } \left(I,\;\dot{H}_{p'}^{s} \right)} \lesssim\left\| u\right\| _{L^{\gamma \left(r\right)} \left(I,\;\dot{H}_{r}^{s} \right)}^{\sigma +1} ,
\end{equation}
\begin{equation} \label{GrindEQ__4_2_}
\left\| |x|^{-b} f(u)\right\| _{L^{\gamma \left(p\right)^{{'} } } \left(I,\;H_{p'}^{s} \right)} \lesssim\left\| u\right\| _{L^{\gamma \left(r\right)} \left(I,\;\dot{H}_{r}^{s} \right)}^{\sigma } \left\| u\right\| _{L^{\gamma \left(r\right)} \left(I,\;H_{r}^{s} \right)} ,
\end{equation}
\begin{eqnarray}\begin{split} \label{GrindEQ__4_3_}
&\left\| |x|^{-b} f(u)-|x|^{-b} f(v)\right\| _{L^{\gamma \left(p\right)^{{'} } } \left(I,\;L^{p'} \right)}\\
&~~~~~~~~~~~~~~~~~\lesssim\left(\left\| u\right\| _{L^{\gamma \left(r\right)} \left(I,\;\dot{H}_{r}^{s} \right)}^{\sigma } +\left\| u\right\| _{L^{\gamma \left(r\right)} \left(I,\;\dot{H}_{r}^{s} \right)}^{\sigma } \right)\left\| u-v\right\| _{L^{\gamma \left(r\right)} \left(I,\;L^{r} \right)} ,
\end{split}\end{eqnarray}
where $I\subset \R$ is an interval, $p=\frac{2n}{n-2} $ and $r$ is given in \eqref{GrindEQ__1_9_}.
\end{lem}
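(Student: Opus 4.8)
The plan is to obtain all three estimates by combining the \emph{spatial} nonlinear estimates of Section 3 (Lemmas \ref{lem 3.1.}, \ref{lem 3.2.} and \ref{lem 3.4.}) with H\"older's inequality in the time variable, after checking that the specific exponents $p=\frac{2n}{n-2}$ and $r=\frac{2n(\sigma+1)}{n(\sigma+1)-2}$ meet every hypothesis of those lemmas. First I would record the elementary identities $\frac{1}{p'}=\frac12+\frac1n$, $\frac1r=\frac12-\frac{1}{n(\sigma+1)}$, $\gamma(p)=2$ and $\gamma(r)=2(\sigma+1)$, the last two coming from the admissibility relation \eqref{GrindEQ__1_6_}. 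The crucial algebraic check is that the scaling condition \eqref{GrindEQ__3_2_}, read with $p$ replaced by $p'$, holds: substituting the above values and using the criticality $\sigma=\frac{4-2b}{n-2s}$ (equivalently $\frac{\sigma s}{n}=\frac{\sigma}{2}-\frac{2-b}{n}$), the right-hand side of \eqref{GrindEQ__3_2_} collapses exactly to $\frac12+\frac1n=\frac{1}{p'}$. Likewise the positivity requirement $\frac1r-\frac sn>0$ is equivalent to $b<1+\frac{n-2s}{2}$, which is assumed in Theorem \ref{thm 1.3.}, and one checks $2<r<\frac{2n}{n-2}$, so that $(\gamma(r),r)$ is admissible.

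Next I would discharge the remaining structural hypotheses of the spatial lemmas by translating them into the stated conditions on $b$. When $s\in\N$ the hypothesis $\frac bs<\sigma$ of Lemma \ref{lem 3.1.} is, after clearing denominators, precisely $b<\frac{4s}{n}$; when $s\notin\N$ and $n\ge4$ the hypothesis $\frac{b+1}{s}<\sigma$ of Lemma \ref{lem 3.2.} becomes $b<\frac{6s}{n}-1$, while the auxiliary requirement $\frac1{p'}+\frac{\lceil s\rceil-s}{n}<1$ follows since $\frac{1}{p'}=\frac12+\frac1n$ and $0<\lceil s\rceil-s<1$ give $\frac1{p'}+\frac{\lceil s\rceil-s}{n}<\frac12+\frac2n\le1$ for $n\ge4$. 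With these verified, the spatial estimate $\||x|^{-b}f(u(t))\|_{\dot H_{p'}^s}\lesssim\|u(t)\|_{\dot H_r^s}^{\sigma+1}$ holds for a.e.\ $t$; raising to the power $\gamma(p)'$ and integrating, together with the exponent balance $(\sigma+1)\gamma(p)'=\gamma(r)$ (which holds since $\gamma(p)'=2$), yields \eqref{GrindEQ__4_1_}.

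For \eqref{GrindEQ__4_2_} I would use $\|g\|_{H_{p'}^s}\sim\|g\|_{L^{p'}}+\|g\|_{\dot H_{p'}^s}$: the homogeneous part is controlled by \eqref{GrindEQ__4_1_}, while the $L^{p'}$ part is handled by Lemma \ref{lem 3.4.} (whose hypothesis $\frac bs<\sigma$ holds in both cases, since $\frac{6s}{n}-1<\frac{4s}{n}$ for $s<\frac n2$), giving pointwise in time $\||x|^{-b}f(u(t))\|_{H_{p'}^s}\lesssim\|u(t)\|_{\dot H_r^s}^{\sigma}\|u(t)\|_{H_r^s}$; a H\"older split in time with exponents $\frac{\gamma(r)}{\sigma}$ and $\gamma(r)$, which satisfy $\frac{\sigma}{\gamma(r)}+\frac{1}{\gamma(r)}=\frac12=\frac1{\gamma(p)'}$, then produces the mixed norm on the right. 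For the difference estimate \eqref{GrindEQ__4_3_} I would first record the pointwise bound $|f(u)-f(v)|\lesssim(|u|^\sigma+|v|^\sigma)|u-v|$, obtained by writing $f(u)-f(v)=\int_0^1\frac{d}{d\theta}f(v+\theta(u-v))\,d\theta$ and using \eqref{GrindEQ__3_1_} with $k=1$; applying Lemma \ref{lem 3.4.} with the role of $v$ there played by $u-v$, followed by the same H\"older-in-time argument, then finishes the proof.

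The main obstacle I anticipate is not any single estimate but the \emph{simultaneous} closing of two rigid constraints: the spatial scaling identity \eqref{GrindEQ__3_2_} and the time-integrability balance $(\sigma+1)\gamma(p)'=\gamma(r)$. Both are forced by the single choice \eqref{GrindEQ__1_9_} of $r$, and they reconcile only because of the criticality $\sigma=\frac{4-2b}{n-2s}$; keeping careful track of how each inequality on $b$ in the statement ($b<1+\frac{n-2s}{2}$, $b<\frac{4s}{n}$, $b<\frac{6s}{n}-1$) corresponds respectively to a positivity, a subcriticality, and an auxiliary integrability condition in Lemmas \ref{lem 3.1.}--\ref{lem 3.4.} is where the argument must be most careful.
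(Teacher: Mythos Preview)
Your proposal is correct and follows essentially the same approach as the paper's own proof: verify that the specific choice $p=\frac{2n}{n-2}$, $r$ as in \eqref{GrindEQ__1_9_} makes the scaling relation \eqref{GrindEQ__3_2_} (with $p'$ in place of $p$) and the time balance $\frac{1}{\gamma(p)'}=\frac{\sigma+1}{\gamma(r)}$ hold, translate the hypotheses $\frac{b}{s}<\sigma$, $\frac{b+1}{s}<\sigma$ and $\frac{1}{p'}+\frac{\lceil s\rceil-s}{n}<1$ into the stated inequalities on $b$, and then feed Lemmas \ref{lem 3.1.}, \ref{lem 3.2.} and \ref{lem 3.4.} into H\"older in time. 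The only cosmetic difference is that you spell out $\gamma(p)=2$, $\gamma(r)=2(\sigma+1)$ explicitly and justify the $H^s_{p'}$ bound in \eqref{GrindEQ__4_2_} via the equivalence $\|g\|_{H^s_{p'}}\sim\|g\|_{L^{p'}}+\|g\|_{\dot H^s_{p'}}$ together with $\|u\|_{\dot H^s_r}\le\|u\|_{H^s_r}$, whereas the paper simply quotes \eqref{GrindEQ__4_6_} and Lemma \ref{lem 3.4.} to obtain the pointwise bound \eqref{GrindEQ__4_9_}; the content is identical.
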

\begin{proof}
Noticing that
\begin{equation} \label{GrindEQ__4_4_}
\frac{1}{2} -\frac{1}{n} <\frac{1}{r} =\frac{1}{2} -\frac{1}{n\left(\sigma +1\right)} <\frac{1}{2} ,
\end{equation}
we can see that $\left(\gamma \left(r\right),\;r\right)$ is an admissible pair. We can also see that $\frac{1}{r} -\frac{s}{n} >0$ if, and only if, $b<1+\frac{n-2s}{2} $. Note also that $\frac{b+1}{s} <\sigma $ is equivalent to $b<\frac{6s}{n} -1$, and that $\frac{b}{s} <\sigma $ is equivalent to $b<\frac{4s}{n} $. We also have
\begin{equation} \label{GrindEQ__4_5_}
\frac{1}{p'} =\sigma \left(\frac{1}{r} -\frac{s}{n} \right)+\frac{1}{r} +\frac{b}{n} .
\end{equation}
Furthermore, we can easily verify that $\frac{1}{p'}+\frac{\left\lceil s\right\rceil-s}{n} <1$ for $s\notin \N$ and $n\ge 4$. Hence it follows from Lemma \ref{lem 3.1.} and \ref{lem 3.2.} that
\begin{equation} \label{GrindEQ__4_6_}
\left\| |x|^{-b} f(u)\right\| _{\dot{H}_{p'}^{s} } \lesssim\left\| u\right\| _{\dot{H}_{r}^{s} }^{\sigma +1} .
\end{equation}
On the other hand, we have
\begin{equation} \label{GrindEQ__4_7_}
\frac{1}{\gamma (p)'} =\frac{\sigma +1}{\gamma (r)} .
\end{equation}
Using \eqref{GrindEQ__4_6_} and \eqref{GrindEQ__4_7_}, we have
\begin{equation} \label{GrindEQ__4_8_}
\left\| |x|^{-b} f(u)\right\| _{L^{\gamma \left(p\right)^{{'} } } \left(I,\;\dot{H}_{p'}^{s} \right)} \lesssim\left\| u\right\| _{L^{\gamma \left(r\right)} \left(I,\;\dot{H}_{r}^{s} \right)}^{\sigma +1} ,
\end{equation}
this concludes the proof of \eqref{GrindEQ__4_1_}. It follows from \eqref{GrindEQ__4_6_} and Lemma \ref{lem 3.4.} that
\begin{equation} \label{GrindEQ__4_9_}
\left\| |x|^{-b} f(u)\right\| _{H_{p'}^{s} } \lesssim\left\| u\right\| _{\dot{H}_{r}^{s} }^{\sigma } \left\| u\right\| _{H_{r}^{s} } .
\end{equation}
Using \eqref{GrindEQ__4_7_}, \eqref{GrindEQ__4_9_} and H\"{o}lder inequality, we also have
\begin{equation} \label{GrindEQ__4_10_}
\left\| |x|^{-b} f(u)\right\| _{L^{\gamma \left(p\right)^{{'} } } \left(I,\;H_{p'}^{s} \right)} \lesssim\left\| u\right\| _{L^{\gamma \left(r\right)} \left(I,\;\dot{H}_{r}^{s} \right)}^{\sigma } \left\| u\right\| _{L^{\gamma \left(r\right)} \left(I,\;H_{r}^{s} \right)} ,
\end{equation}
this conclude the proof of \eqref{GrindEQ__4_2_}. Finally, we prove \eqref{GrindEQ__4_3_}. Using the same argument as in Remark 2.6 in \cite{G17}, we can easily see that
\begin{equation} \label{GrindEQ__4_11_}
\left||x|^{-b} f(u)-|x|^{-b} f(v)\right|\lesssim|x|^{-b} \left(\left|u\right|^{\sigma } +\left|v\right|^{\sigma } \right)\left|u-v\right|.
\end{equation}
It follows from \eqref{GrindEQ__4_11_} and Lemma \ref{lem 3.4.} that
\begin{equation} \label{GrindEQ__4_12_}
\left\| |x|^{-b} f(u)-|x|^{-b} f(v)\right\| _{L^{p'} } \lesssim\left(\left\| u\right\| _{\dot{H}_{r}^{s} }^{\sigma } +\left\| v\right\| _{\dot{H}_{r}^{s} }^{\sigma } \right)\left\| u-v\right\| _{L^{r} } .
\end{equation}
Using \eqref{GrindEQ__4_7_}, \eqref{GrindEQ__4_12_} and H\"{o}lder inequality, we immediately have
\begin{eqnarray}\begin{split} \label{GrindEQ__4_13_}
&\left\| |x|^{-b}f(u)-|x|^{-b}f(v)\right\| _{L^{\gamma \left(p\right)^{{'} } } \left(I,\;L^{p'} \right)} \\
&~~~~~~~~~~~~~~~~~~\lesssim\left(\left\| u\right\| _{L^{\gamma \left(r\right)} \left(I,\;\dot{H}_{r}^{s} \right)}^{\sigma } +\left\| u\right\| _{L^{\gamma \left(r\right)} \left(I,\;\dot{H}_{r}^{s} \right)}^{\sigma } \right)\left\| u-v\right\| _{L^{\gamma \left(r\right)} \left(I,\;L^{r} \right)} ,
\end{split}\end{eqnarray}
this completes the proof.
\end{proof}

\begin{proof}[\textbf{Proof of Theorem \ref{thm 1.3.}}]

First, we prove the local well-posedness of \eqref{GrindEQ__1_1_}. Let $T>0$ and $A>0$ which will be chosen later. We define the following complete metric space
\begin{equation} \label{GrindEQ__4_14_}
D=\left\{u\in L^{\gamma \left(r\right)} \left(I,\;H_{r}^{s} \right):\;\left\| u\right\| _{L^{\gamma \left(r\right)} \left(I,\;H_{r}^{s} \right)} \le A\right\}
\end{equation}
which is equipped with the metric
\begin{equation} \label{GrindEQ__4_15_}
d\left(u,\;v\right)=\left\| u-v\right\| _{L^{\gamma \left(r\right)} \left(I,\;L^{r} \right)} ,
\end{equation}
where $I=\left[-T,\;T\right]$ and $r$ is given in \eqref{GrindEQ__1_9_}. We consider the mapping
\begin{equation} \label{GrindEQ__4_16_}
T:\;u(t)\to S(t)u_{0} -i\lambda \int _{0}^{t}S(t-\tau )|x|^{-b} \left|u(\tau )\right|^{\sigma } u(\tau )d\tau  \equiv u_{L} +u_{NL} ,
\end{equation}
where
\[u_{L} =S(t)u_{0} ,~u_{NL} =-i\lambda \int _{0}^{t}S(t-\tau )|x|^{-b} \left|u(\tau )\right|^{\sigma } u(\tau )d\tau  .\]
By Strichartz estimate \eqref{GrindEQ__2_1_}, we can see that $\left\| S(t)u_{0} \right\| _{L^{\gamma (r)} ([-T,\;T],\;H_{r}^{s} )} \to 0$ as $T\to 0$.
We take $A>0$ satisfying $CA^{\sigma } \le \frac{1}{4} $ and $T>0$ such that
\begin{equation} \label{GrindEQ__4_17_}
\left\| S(t)u_{0} \right\| _{L^{\gamma (r)} ([-T,\;T],\;H_{r}^{s} )} \le \frac{A}{2} .
\end{equation}
Using Strichartz estimates \eqref{GrindEQ__2_2_} and nonlinear estimate \eqref{GrindEQ__4_2_}, we have
\begin{equation} \label{GrindEQ__4_18_}
\left\| u_{NL} \right\| _{L^{\gamma \left(r\right)} \left(I,\;H_{r}^{s} \right)} \lesssim\left\| u\right\| _{L^{\gamma \left(r\right)} \left(I,\;\dot{H}_{r}^{s} \right)}^{\sigma } \left\| u\right\| _{L^{\gamma \left(r\right)} \left(I,\;H_{r}^{s} \right)} .
\end{equation}
In view of \eqref{GrindEQ__4_17_} and \eqref{GrindEQ__4_18_}, we have
\begin{equation} \label{GrindEQ__4_19_}
\left\| Tu\right\| _{L^{\gamma \left(r\right)} \left(I,\;H_{r}^{s} \right)} \le \left\| S(t)u_{0} \right\| _{L^{\gamma \left(r\right)} \left(I,\;H_{r}^{s} \right)} +C\left\| u\right\| _{L^{\gamma \left(r\right)} \left(I,\;H_{r}^{s} \right)}^{\sigma +1} \le A.
\end{equation}
It also follow from \eqref{GrindEQ__2_2_} and \eqref{GrindEQ__4_3_} that
\begin{eqnarray}\begin{split} \label{GrindEQ__4_20_}
\left\| Tu-Tv\right\| _{L^{\gamma \left(r\right)} \left(I,\;L^{r} \right)}& \lesssim\left(\left\| u\right\| _{L^{\gamma \left(r\right)} \left(I,\;\dot{H}_{r}^{s} \right)}^{\sigma } +\left\| u\right\| _{L^{\gamma \left(r\right)} \left(I,\;\dot{H}_{r}^{s} \right)}^{\sigma } \right)\left\| u-v\right\| _{L^{\gamma \left(r\right)} \left(I,\;L^{r} \right)}  \\
&\le 2CA^{\sigma } \left\| u-v\right\| _{L^{\gamma \left(r\right)} \left(I,\;L^{r} \right)} \le \frac{1}{2} \left\| u-v\right\| _{L^{\gamma \left(r\right)} \left(I,\;L^{r} \right)} .
\end{split}\end{eqnarray}
\eqref{GrindEQ__4_19_} and \eqref{GrindEQ__4_20_} imply that $T:(D,\;d)\to (D,\;d)$ is a contraction mapping. From Banach fixed point theorem, there exists a unique solution $u$ of \eqref{GrindEQ__1_1_} in $(D,d)$. Furthermore, for any admissible pair $(\gamma(p),\;p)$, it follows from Lemma \ref{lem 2.6.} (Strichartz estimates) and Lemma \ref{lem 4.1.} that
\begin{equation} \nonumber
\left\| u\right\| _{L^{\gamma \left(p\right)} \left(I,\;H_{p}^{s} \right)} \lesssim\left\| u_{0} \right\| _{H^{s} } +\left\| u\right\| _{L^{\gamma \left(r\right)} \left(I,\;H_{r}^{s} \right)}^{\sigma +1},
\end{equation}
which implies $u\in L^{\gamma \left(p\right)} \left(I,\;H_{p}^{s} \right)$. This completes the proof of the local well-posedness of \eqref{GrindEQ__1_1_}.

Next, we consider the global well-posedness of \eqref{GrindEQ__1_1_} with small initial data. Let $M>0$ and $m>0$ which will be chosen later. We define the following complete metric space
\begin{equation} \label{GrindEQ__4_21_}
\bar{D}=\left\{u\in L^{\gamma \left(r\right)} \left(\R,\;H_{r}^{s} \right):\;\left\| u\right\| _{L^{\gamma \left(r\right)} \left(\R,\;\dot{H}_{r}^{s} \right)} \le m,\;\left\| u\right\| _{L^{\gamma \left(r\right)} \left(\R,\;H_{r}^{s} \right)} \le M\right\}
\end{equation}
which is equipped with the metric
\begin{equation} \label{GrindEQ__4_22_}
d\left(u,\;v\right)=\left\| u-v\right\| _{L^{\gamma \left(r\right)} \left(\R,\;L^{r} \right)} .
\end{equation}
It follows from \eqref{GrindEQ__4_16_}, Lemma \ref{lem 2.6.} (Strichartz estimates) and Lemma \ref{lem 4.1.} that
\begin{equation} \label{GrindEQ__4_23_}
\left\| Tu\right\| _{L^{\gamma \left(r\right)} \left(\R,\;\dot{H}_{r}^{s} \right)} \le C\left(\left\| u_{0} \right\| _{\dot{H}^{s} } +\left\| u\right\| _{L^{\gamma \left(r\right)} \left(\R,\;\dot{H}_{r}^{s} \right)}^{\sigma +1} \right),
\end{equation}
\begin{equation} \label{GrindEQ__4_24_}
\left\| Tu\right\| _{L^{\gamma \left(r\right)} \left(\R,\;H_{r}^{s} \right)} \le C\left(\left\| u_{0} \right\| _{H^{s} } +\left\| u\right\| _{L^{\gamma \left(r\right)} \left(\R,\;\dot{H}_{r}^{s} \right)}^{\sigma } \left\| u\right\| _{L^{\gamma \left(r\right)} \left(\R,\;H_{r}^{s} \right)} \right),
\end{equation}
\begin{equation} \label{GrindEQ__4_25_}
\left\| Tu-Tv\right\| _{L^{\gamma \left(r\right)} \left(\R,\;L^{r} \right)} \le C\left(\left\| u\right\| _{L^{\gamma \left(r\right)} \left(\R,\;\dot{H}_{r}^{s} \right)}^{\sigma } +\left\| u\right\| _{L^{\gamma \left(r\right)} \left(\R,\;\dot{H}_{r}^{s} \right)}^{\sigma } \right)\left\| u-v\right\| _{L^{\gamma \left(r\right)} \left(\R,\;L^{r} \right)} .
\end{equation}
Put $m=2C\left\| u_{0} \right\| _{\dot{H}^{s} } $, $M=2C\left\| u_{0} \right\| _{H^{s} } $ and $\delta =2\left(4C\right)^{-\frac{\sigma +1}{\sigma } } $. If $\left\| u_{0} \right\| _{\dot{H}^{s} } \le \delta $, i.e. $Cm^{\sigma } <\frac{1}{4} $, then it follows from \eqref{GrindEQ__4_23_}-\eqref{GrindEQ__4_25_} that
\begin{equation} \label{GrindEQ__4_26_}
\left\| Tu\right\| _{L^{\gamma \left(r\right)} \left(\R,\;\dot{H}_{r}^{s} \right)} \le \frac{m}{2} +Cm^{\sigma +1} \le m,
\end{equation}
\begin{equation} \label{GrindEQ__4_27_}
\left\| Tu\right\| _{L^{\gamma \left(r\right)} \left(\R,\;H_{r}^{s} \right)} \le \frac{m}{2} +Cm^{\sigma } M\le M,
\end{equation}
\begin{equation} \label{GrindEQ__4_28_}
\left\| Tu-Tv\right\| _{L^{\gamma \left(r\right)} \left(\R,\;L^{r} \right)} \le 2Cm^{\sigma } \left\| u-v\right\| _{L^{\gamma \left(r\right)} \left(\R,\;L^{r} \right)} \le \frac{1}{2} \left\| u-v\right\| _{L^{\gamma \left(r\right)} \left(\R,\;L^{r} \right)} .
\end{equation}
Hence, $T:(\bar{D},\;d)\to (\bar{D},\;d)$ is a contraction mapping and there exists a unique solution of \eqref{GrindEQ__1_1_} in $\bar{D}$.
Furthermore for any admissible pair $(\gamma (p),\;p)$, it follows from Lemma \ref{lem 2.6.} (Strichartz estimates) and Lemma \ref{lem 4.1.} that
\begin{equation} \label{GrindEQ__4_29_}
\left\| u\right\| _{L^{\gamma (p)} \left(\R,\;\dot{H}_{p}^{s} \right)} \lesssim\left\| u_{0} \right\| _{\dot{H}^{s} } +\left\| u\right\| _{L^{\gamma \left(r\right)} \left(\R,\;\dot{H}_{r}^{s} \right)}^{\sigma +1} \le m=2C\left\| u_{0} \right\| _{\dot{H}^{s} } ,
\end{equation}
\begin{equation} \label{GrindEQ__4_30_}
\left\| u\right\| _{L^{\gamma (p)} \left(\R,\;H_{p}^{s} \right)} \lesssim\left\| u_{0} \right\| _{H^{s} } +\left\| u\right\| _{L^{\gamma \left(r\right)} \left(\R,\;\dot{H}_{r}^{s} \right)}^{\sigma } \left\| u\right\| _{L^{\gamma \left(r\right)} \left(\R,\;H_{r}^{s} \right)} \le M=2C\left\| u_{0} \right\| _{H^{s} } .
\end{equation}

Finally, we prove the scattering result. We can see that \eqref{GrindEQ__1_13_} is equivalent to
\[{\mathop{\lim }\limits_{t\to \pm \infty }} \left\| e^{-it\Delta } u(t)-u_{0}^{\pm } \right\| _{H^{s} (\R^{n})} =0.\]
In other words, it suffices to show that $e^{-it\Delta } u(t)$ converges in $H^{s} $ as $t_{1} ,\;t_{2} \to \pm \infty $.
Let $0<t_{1} <t_{2} <+\infty $. By using Lemma \ref{lem 2.6.} (Strichartz estimates) and Lemma \ref{lem 4.1.}, we have
\begin{eqnarray}\begin{split} \label{GrindEQ__4_31_}
&\left\| e^{-it_{2} \Delta } u\left(t_{2} \right)-e^{-it_{1} \Delta } u\left(t_{1} \right)\right\| _{H^{s} }=\left\| \int _{t_{1} }^{t_{2} }e^{-i\tau \Delta } |x|^{-b} f\left(u\left(\tau \right)\right)d\tau  \right\| _{H^{s} } \\
&~~~~~~~~~~~~~~~~~~~\lesssim\left\| |x|^{-b} f(u)\right\| _{L^{\gamma \left(p\right)^{{'} } } \left(\left(t_{1} ,\; t_{2} \right),\;H_{p'}^{s} \right)} \lesssim\left\| u\right\| _{L^{\gamma \left(r\right)} \left(\left(t_{1} ,\; t_{2} \right),\;H_{r}^{s} \right)}^{\sigma +{\rm 1}}.
\end{split}\end{eqnarray}
Using \eqref{GrindEQ__4_31_} and the fact $\left\| u\right\| _{L^{\gamma \left(r\right)} \left(\R,\;H_{r}^{s} \right)} <\infty $, we have
\[\left\| e^{-it_{2} \Delta } u\left(t_{2} \right)-e^{-it_{1} \Delta } u\left(t_{1} \right)\right\| _{H^{s} (\R^{n})} \to 0,\]
as $t_{1} ,\;t_{2} \to +\infty $. Thus, the limit $u_{0}^{+} :={\mathop{\lim }\limits_{t\to +\infty }} e^{-it\Delta } u\left(t\right)$
exits in $H^{s} (\R^{n})$. This shows the small data scattering for positive time, the one for negative time is treated similarly. This completes the proof.
\end{proof}
Next, we prove Theorem \ref{thm 1.4.}. Since the proof is very similar to that of Theorem \ref{thm 1.3.}, we only sketch the proof.
\begin{proof}[\textbf{Proof of Theorem \ref{thm 1.4.}}]
Using the hypotheses of Theorem \ref{thm 1.4.}, we can easily verify that $2<r<\frac{2n}{n-2} $, where $r$ is given in \eqref{GrindEQ__1_15_}. Since $b<1$ and $0<\varepsilon <1-b$, we can also see that $\frac{1}{r} >\frac{s}{n} $. Putting $p:=\frac{3}{2-s+\varepsilon } $, we also have $2<p<\frac{2n}{n-2} $. We can also see that \eqref{GrindEQ__4_5_} and \eqref{GrindEQ__4_7_} hold. Furthermore, we can see that $\frac{1}{p'}+\frac{\left\lceil s\right\rceil-s}{n} =1-\frac{\varepsilon }{n} <1$.
Thus we can use Lemma \ref{lem 3.2.} to get \eqref{GrindEQ__4_1_}-\eqref{GrindEQ__4_3_} and we omit the details. Repeating the same argument as in the proof of Theorem \ref{thm 1.3.}, we can get the desired results whose proof will be omitted.
\end{proof}


\section*{References}


\begin{thebibliography}{22}

\bibitem{AK21} {\small J. An, J. Kim, Local well-posedness for the inhomogeneous nonlinear Schr\"{o}dinger equation in $H^{s}(\R^{n} )$, Nonlinear Anal. Real World Appl. 59 (2021) 103268.}
\bibitem{AC21} {\small A.H. Ardila, M. Cardoso, Blow-up solutions and strong instability of
ground states for the inhomogeneous nonlinear Schr\"{o}dinger equation, Commun. Pure Appl. Anal. 20 (1) (2021) 101-119.}
\bibitem{BKMT11} {\small O.V. Borovkova, Y.V. Kartashov, B.A. Malomed, L. Torner, Algebraic bright and vortex solitons in defocusing media, Opt. Lett. 36 (2011) 3088-3090.}
\bibitem{BKVLMT12} {\small O.V. Borovkova, Y.V. Kartashov, V.A. Vysloukh, V.E. Lobanov, B.A. Malomed, L. Torner, Solitons supported by spatially inhomogeneous nonlinear losses, Opt. Express 20 (3) (2012) 2657-2667.}
\bibitem{C21} {\small L. Campos, Scattering of radial solutions to the inhomogeneous nonlinear Schr\"{o}dinger equation, Nonlinear Anal. 202 (2021) 112118.}
\bibitem{C03} {\small T. Cazenave, Semilinear Schr\"{o}dinger Equations, Courant Lecture Notes in Mathematics, New York University, Courant Institute of Mathematical Sciences, New York; American Mathematical Society, Providence, RI, 2003.}
\bibitem{CW91} {\small M. Christ, M. Weinstein, Dispersion of small amplitude solutions of the generalized Korteweg--de Vries equation, J. Funct. Anal. 100 (1991) 87-109.}
\bibitem{CG16} {\small V. Combet, F. Genoud, Classification of minimal mass blow-up solutions for an $L^{2} $ critical inhomogeneous NLS, J. Evol. Equ. 16 (2) (2016) 483-500.}
\bibitem{D18} {\small V.D. Dinh, Blowup of $H^{1} $ solutions for a class of the focusing inhomogeneous nonlinear Schr\"{o}dinger equation, Nonlinear Anal. 174 (2018) 169-188.}
\bibitem{D19} {\small V.D. Dinh, Energy scattering for a class of the defocusing inhomogeneous nonlinear Schr\"{o}dinger equation, J. Evol. Equ. 19 (2) (2019) 411-434.}
\bibitem{F16} {\small L.G. Farah, Global well-posedness and blow-up on the energy space for the inhomogeneous nonlinear Schr\"{o}dinger equation, J. Evol. Equ. 16 (1) (2016) 193-208.}
\bibitem{FG17} {\small L.G. Farah, C.M. Guzm\'{a}n, Scattering for the radial 3D cubic focusing inhomogeneous nonlinear Schr\"{o}dinger equation, J. Differential Equations 262 (8) (2017) 4175-4231.}
\bibitem{FG19} {\small L.G. Farah, C.M. Guzm\'{a}n, Scattering for the radial focusing inhomogeneous NLS equation in higher dimensions, Bull. Braz. Math. Soc. (N.S.) 51 (2) (2020) 449-512.}
\bibitem{GS08} {\small F. Genoud, C.A. Stuart, Schr\"{o}dinger equations with a spatially decaying nonlinearity: existence and stability of standing waves, Discrete Contin. Dyn. Syst. 21 (1) (2008) 137-186.}
\bibitem{G12} {\small F. Genoud, An inhomogeneous, $L^{2} $-critical, nonlinear Schr\"{o}dinger equation, Z. Anal. Anwend. 31 (3) (2012) 283-290.}
\bibitem{G00} {\small T.S. Gill, Optical guiding of laser beam in nonuniform plasma, Pramana J. Phys. 55 (5-6) (2000) 835-842.}
\bibitem{G08} {\small L. Grafakos, Classical Fourier Analysis, second ed., Springer, New York, 2008.}
\bibitem{G17} {\small C.M. Guzm\'{a}n, On well posedness for the inhomogeneous nonlinear Schr\"{o}dinger equation, Nonlinear Anal. Real World Appl. 37 (2017) 249-286.}
\bibitem{GM21} {\small C.M. Guzm\'{a}n, J. Murphy, Scattering for the non-radial energy-critical inhomogeneous NLS,  2021. arXiv:2101.04813v1 [math.AP].}
\bibitem{HYZ12} \small{H. Hajaiej, X. Yu, Z. Zhai, Fractional Gagliardo-Nirenberg and Hardy inequalities under Lorentz norms, J. Math. Anal. Appl. 396 (2012) 569-577.}
\bibitem{LT94} {\small C.S. Liu, V.K. Tripathi, Laser guiding in an axially nonuniform plasma channel, Phys. Plasmas 1 (9) (1994) 3100-3103.}
\bibitem{WHHG11} {\small B.X. Wang, Z. Huo, C. Hao, Z. Guo, Harmonic Analysis Method for Nonlinear Evolution Equations, I, World Scientific, Singapore, 2011.}

\end{thebibliography}
\end{document}